\theoremstyle{definition}
\newtheorem{definition}{Definition}[section]
\newtheorem{theorem}[definition]{Theorem}
\newtheorem*{theorem*}{Conjecture}
\newtheorem{proposition}[definition]{Proposition}
\newtheorem{lemma}[definition]{Lemma}
\theoremstyle{remark}
\newtheorem{remark}[definition]{Remark}
\newtheorem{example}[definition]{Example}
\newcounter{enumctr}
\newcommand{\R}{\mathbb{R}}
\def\iu{{\ensuremath{\mathrm{i}}}}
\def\du{{\ensuremath{\mathrm{d}}}}
\newcommand{\diag}{\mathop{\mathrm{diag}}}
\providecommand{\keywords}[1]{\textbf{\textbf{Key words: }} #1}
\begin{document}
	\title{A constructive approach for investigating the stability of incommensurate fractional differential systems}       
	
	\author{Kai Diethelm\footnote{Faculty of Applied Natural Sciences and Humanities (FANG), 
		Technical University of Applied Sciences W\"urzburg-Schweinfurt, Ignaz-Sch\"on-Str.\ 11, 
		97421 Schweinfurt, Germany, \tt \{kai.diethelm, safoura.hashemishahraki\}@thws.de}
	\and
		Safoura Hashemishahraki$^*$
	\and
		Ha Duc Thai\footnote{Institute of Mathematics, Vietnam Academy of Science and Technology, 
			18 Hoang Quoc Viet, 10307 Ha Noi, Vietnam, \tt \{hdthai, httuan\}@math.ac.vn}
	\and
		Hoang The Tuan$^\dag$
	}
	
	\date{}
	\maketitle   
	
	\begin{abstract} 
		This paper is devoted to studying the asymptotic behaviour of solutions to 
		generalized incommensurate fractional systems. To this end,
		we first consider fractional systems with rational orders and introduce a criterion 
		that is necessary and sufficient to ensure the stability of such systems. Next, from the 
		fractional order pseudospectrum definition proposed by Šanca et al.,
		we formulate the concept of a rational approximation for the fractional 
		spectrum of a incommensurate fractional systems with general,
		not necessarily rational, orders. Our first important new
		contribution is to show the equivalence between the 
		fractional spectrum of a incommensurate linear system and its rational approximation. 
		With this result in hand, we use ideas developed in our earlier work to demonstrate the 
		stability of an equilibrium point to nonlinear systems in arbitrary finite-dimensional spaces. 
		A second novel aspect of our work is the fact that the approach is constructive. 
		It is effective and widely applicable in studying the asymptotic behavior of solutions to linear 
		incommensurate fractional differential systems with constant coefficient matrices and linearized 
		stability theory for nonlinear incommensurate fractional differential systems.
		Finally, we give numerical simulations to illustrate the merit of the proposed theoretical results.
	\end{abstract}
	
\keywords{Fractional differential equations, {incommensurate} fractional systems, 
	fractional spectrum, fractional order pseudospectrum, Mittag-Leffler stability}
	
{\bf AMS subject classifications:}  Primary 34A08, 34D20; 
	Secondary 26A33, 34C11, 45A05, 45D05, 45M05, 45M10
	
{\bf Running title:} Incommensurate fractional systems

%{\bf Data availability statement:} not applicable
%
%{\bf Funding statement:} The research of H. D. Thai and H. T. Tuan is supported by the Vietnam National Foundation for Science and Technology Development (NAFOSTED) under grant number 101.02--2021.08.
%
%{\bf Conflict of interest disclosure:} The authors declare no conflict of interest.
%
%{\bf Ethics approval statement:} not applicable
%
%{\bf Patient consent statement:} not applicable
%
%{\bf Permission to reproduce material from other sources:} not applicable
%
%{\bf Clinical trial registration:} not applicable

% \newpage
	
\section{Introduction}
{{The primary goal of this work is to establish a deeper understanding of 
the stability of incommensurate systems of fractional differential equations
with Caputo operators. To the best of our knowledge, the first paper to investigate such questions 
was \cite{Deng} where it was shown that the system is stable if the 
zeros of its fractional characteristic polynomial are in the open left half of the 
complex plane. While this result is very valuable from a theoretical point of view,
it is only of rather limited practical use because finding roots of a fractional characteristic polynomial 
of a incommensurate fractional order system is a complicated task. 
To date, only a few studies in this direction have been carried out only in some special cases. 
A possible approach is to use the modified frequency domain analysis which is based on based on 
Nyquist's theorem or Mikhailov's stability criterion, see, e.g., \cite{Aguliar1, Sabatier, Stanis, Trige}. 
For the cases where the ordering relation of the solutions of systems is preserved (e.g., positive systems), 
modified comparison principles have been developed, see, e.g., \cite{Jia, BS_22, Tuan-Thinh, Tuan-Thinh2}.}}

Our aim in this paper is to propose a comprehensive, complete approach to solving the 
aforementioned problem. Our approach follows. First, we consider fractional order systems with 
rational orders and give a necessary and sufficient condition for their stability. 
Then we study general fractional order systems with arbitrary (rational or irrational) orders. 
Inspired by ideas in \cite{Kostic}, 
we construct rational approximations of the fractional spectrum of a matrix. The existence of 
these approximations is verified. Furthermore, we demonstrate the equivalence of the 
fractional spectrum of a matrix and its rational approximation. From this we bring 
the problem under investigation to the case when the fractional orders are rational which 
we already know how to solve clearly. The peculiarity of our approach is constructiveness.  
Based on the strategies that we propose, it is not difficult to build computer programs to check 
the stability of any fractional order system. Therefore, compared to the methods previously 
proposed in the literature, which can only solve a few specific cases, it is more effective and 
widely applicable in studying the asymptotic behavior of solutions to linear incommensurate 
fractional differential systems with constant coefficient matrices and also in linearized stability 
theory for nonlinear incommensurate fractional differential systems.

The rest of the article is organized as follows. In Section \ref{s2}, we introduce the definitions 
of fractional derivatives, fractional spectra and pseudo-spectra, and some notations that 
will be used throughout the paper. In Section \ref{s3}, we prove a necessary and sufficient 
criterion for the stability of multi-order fractional systems with rational orders. Section~\ref{s4} 
deals with generalized fractional order systems (systems containing many different,
not necessarily rational, fractional derivatives). This section contains the main results 
that are our most important contributions to understanding and solving the 
problem at hand. Next, the Mittag-Leffler stability 
of an equilibrium point of {incommensurate} fractional nonlinear systems is presented in 
Section \ref{s5}. Finally, numerical simulations are given in Section \ref{s6} to illustrate 
the obtained theoretical results.

\section{Preliminaries}
\label{s2}
	
	For $\alpha \in (0,1]$ and $J = [0, T]$ or $J = [0, \infty)$,  
	the Riemann-Liouville fractional integral of a function ${x} :J \rightarrow \mathbb R$ is defined by 
	$$ 
		I^\alpha_{0^+}x(t) 
		:= \frac{1}{\Gamma(\alpha)}\int_{0}^{t}(t-s)^{\alpha -1}x(s) \, \du s,\ \quad t\in J,
	$$
	and its Caputo fractional derivative of the order $\alpha\in (0,1)$ as 
	$$ 
		^C D^\alpha_{0^+}x(t)
		 := \frac{\du}{\du t} I^{1-\alpha}_{0^+}({x}(t) - {x}(0)), \quad t \in J \setminus \{ 0 \},
	$$
	where $\Gamma(\cdot)$ is the Gamma function and $\frac{\du}{\du t}$ is the classical derivative; see. e.g., 
	\cite[Chapters 2 and 3]{Kai} or \cite{Cong23}.
	Let $n \in \mathbb N$, $\hat\alpha = (\alpha_1, \alpha_2, \ldots, \alpha_n) \in (0,1]^n$ be a multi-index and 
	$x = (x_1,\ldots, x_n)^{\rm T}$ with $x_i : J \rightarrow \mathbb R$, $i=1,\ldots, n$, 
	be a vector valued function.
	Then we denote
	$$ 
		^C D^{\hat \alpha}_{0^+} x(t)
		 := \left(^C D^{\alpha_1}_{0^+}x_1(t), \ldots, {}^C D^{\alpha_n}_{0^+}x_n(t) \right)^{\mathrm T}.
	$$
	% For more details on Caputo fractional derivatives, readers can see more in \cite[Chapter 3]{Kai} and .
	
	For each $n \in \mathbb N$, we denote the set of complex square matrices of 
	order $n$ by $M_n(\mathbb C)$, and 
	$M_n(\mathbb R) \subset M_n(\mathbb C )$ is the set of
	real square matrices of order $n$. The unit matrix of order $n$ is denoted by $I$.
	For a given matrix $A = (a_{ij})_{n \times n} \in M_n(\mathbb C)$, 
	we use $A^{\mathrm T} = (a_{ji})_{n \times n}$ to denote its transpose matrix and $A^* =
	(\overline{a_{ji}})_{n \times n}$ is the conjugate transpose matrix.
	For any $B\in M_n(\mathbb C)$, its spectrum is defined by 
	$\sigma (B) := \{ z \in \mathbb C : \det(z I - B) = 0 \}$. 
	Furthermore, for each $z\in \mathbb{C}$, we put $z^{\hat\alpha}I := 
	\diag(z^{\alpha_1},\ldots, z^{\alpha_n})$.
	Here and in many places later on in the paper we encounter powers of complex 
	numbers with noninteger exponents in the range $(0,1)$.
	Whenever such an expression occurs, we will interpret this in the sense of the principal branch
	of the (potentially multi-valued) complex power function, i.e.\ we say
	\[
		z^\beta = |z|^\beta \exp(\iu \beta \arg(z))
	\]
	whenever $\beta \in (0,1)$ and $z \in \mathbb C$.
	
	Next, we recall some concepts of matrix norms and pseudospectra. To simplify the notation, 
	we write $N = \{ 1, 2, \ldots, n \}$.
	On $\mathbb C^n$, we select a (for the time being, arbitrary) norm $\| \cdot\|$. 
	The associated matrix norm is also designated by $\| \cdot \|$.
%	As usual, for a matrix $M \in M_n(\mathbb C)$, 
%	we define the norm of $M$ by
%	$$ 
%		\| M \|: = \max_{x\in \mathbb C^n,\,\|x\|=1}\,\,\|Mx\|. 
%	$$
%	For example on $\mathbb C^n$,  the following norms $\|\cdot\|_p$, $1 \leq p \leq \infty$ are established 
%	$$
%		\|x\|_p = \begin{cases}
%					\left ( \sum_{j \in N} |x_j|^p\right )^{1/p} & \text{ if }1 \leq p < \infty ,\\
%						\max_{j \in N} |x_j| & \text{ if } p = \infty.
%			\end{cases} 
%	$$
%	Thus, $\| M \|_p = \max_{x\in \mathbb C^n,\,\|x\|_p = 1}\,\,\|Mx\|_p$. In particular, with $p=1,2 $, we obtain
%	\begin{align}
%		\label{chuan1}
%		\|M\|_1 &= \max_{i \in N}\left\{ \sum_{j = 1}^{n}|m_{ji}|\right\}, \\
%		\label{chuan2}
%		\|M\|_2 &= \sqrt{\lambda_{\max}(M^*M)},
%	\end{align}
%	where $\lambda_{\max}(M^*M)$ is the largest eigenvalue of $M^*M$ and
%	\begin{align}
%		\label{chuanvocung}
%		\|M\|_{\infty} = \max_{i \in N}\left\{ \sum_{j = 1}^{n}|m_{ij}|\right\}.
%	\end{align}
	For convenience, we use the convention $\| M^{-1} \|^{-1} = 0$ if and only if $\det M = 0$. 
	For each $x \in \mathbb C^n$, we set $\Re(x) = (\Re(x_1 ), \ldots, \Re(x_n))$. 
	We denote the scalar product in $\mathbb C^n$ by $\left\langle \cdot\, ,\cdot \right\rangle$ 
	and set $\mathbb C_{-} := \{z \in \mathbb C : \Re(z) < 0 \}$ and 
	$\mathbb C_{\geq 0} := \{ z \in \mathbb C : \Re(z) \geq 0 \}$.
	
	From \cite[p.\ 248]{Kostic}, we now recall the essential concepts that we shall use to a 
	large extent throughout this paper. 

	\begin{definition}
		\label{def:pseudospectra}
		Let $n \in \mathbb N$, $A \in M_n(\mathbb R)$ and 
		$\hat\alpha = (\alpha_1, \ldots, \alpha_n)\in (0,1]^n$.
		Then, the \emph{$\hat\alpha$-order spectrum} of $A$ is the set
		\[
			\sigma_{\hat\alpha} (A)
				:= \left \{ z \in \mathbb C : 
					\det \left( \diag( z^{\alpha_1}, \ldots, z^{\alpha_n}) - A \right) = 0 \right \}.
		\]
		Moreover, for $\epsilon>0$, the \emph{$\hat\alpha$-order $\epsilon$-pseudospectrum} 
		of $A$ is defined by
		\begin{equation}
			\label{eq:pseudospectra}
			\sigma_{\hat\alpha, \epsilon}(A) := \{ z \in \mathbb C  : 
				\|(z^{\hat\alpha} I - A)^{-1}\|^{- 1} \leq \epsilon \}.
		\end{equation}
	\end{definition}
		
	It is clear from the above definition that the $\hat\alpha$-order $\epsilon$-pseudospectrum 
	depends on the used norm $\|\cdot\|$. Therefore, to indicate this dependence, we will use the 
	notation $\sigma_{\hat\alpha, \epsilon}^p(A)$ instead of $\sigma_{\hat\alpha, \epsilon}(A)$ 
	in the case where the norm $\|\cdot\|$ is specifically chosen as the norm $\|\cdot\|_p$
	with some $1 \leq p \leq \infty$. The $\hat\alpha$-order spectrum, on the other hand, 
	clearly does not depend on the chosen norm and hence does not need such a notational
	clarification.
	
	\begin{proposition}
		\label{dngp}  
		For some given $\epsilon > 0$, $\hat\alpha \in (0,1]^n$ and $A \in M_n(\mathbb R)$, the 
		$\hat\alpha$-order $\epsilon$\nobreakdash-pseudospectrum of $A$ can be expressed in the following ways:
		\begin{align}
			\sigma_{\hat\alpha, \epsilon}(A) 
				& = \{ z \in \mathbb C : \exists E \in M_n(\mathbb C), \| E \| \leq \epsilon \; 
							\text{such that}\, z \in \sigma_{\hat\alpha}(A + E) \} 
							\label{eq:pseudo-ii}\\
				& = \{ z \in \mathbb C : 
					\exists v \in \mathbb C^n, \|v\| = 1\;
					\text{such that}\, \|(z^{\hat\alpha} I - A)v\| \leq \epsilon \}.
						\label{eq:pseudo-iii}
		\end{align} 
	\end{proposition}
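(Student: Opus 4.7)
My plan is to prove the chain of equivalences (definition) $\Leftrightarrow$ \eqref{eq:pseudo-iii} $\Leftrightarrow$ \eqref{eq:pseudo-ii}, since the middle characterization is the most convenient intermediate stop.

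First, to connect the definition with \eqref{eq:pseudo-iii}, I would split on whether $\det(z^{\hat\alpha}I - A)=0$. In that case, $z \in \sigma_{\hat\alpha}(A)$, any nonzero kernel vector (normalized) witnesses \eqref{eq:pseudo-iii}, and by the stated convention $\|(z^{\hat\alpha}I-A)^{-1}\|^{-1}=0\le\epsilon$, so both conditions hold trivially. Otherwise $M:=z^{\hat\alpha}I-A$ is invertible, and I would use the standard identity
\[
	\|M^{-1}\| \;=\; \sup_{w\neq 0}\frac{\|M^{-1}w\|}{\|w\|}
	\;=\; \sup_{u\neq 0}\frac{\|u\|}{\|Mu\|}
	\;=\; \Bigl(\inf_{\|v\|=1}\|Mv\|\Bigr)^{-1},
\]
combined with the fact that the infimum is attained on the compact unit sphere of $\mathbb C^n$. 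This gives $\|M^{-1}\|^{-1}\le\epsilon$ iff there exists $v$ with $\|v\|=1$ and $\|Mv\|\le\epsilon$, which is precisely \eqref{eq:pseudo-iii}.

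Next, I would establish \eqref{eq:pseudo-iii} $\Leftrightarrow$ \eqref{eq:pseudo-ii}. The direction $(\Leftarrow)$ is easy: if $z\in\sigma_{\hat\alpha}(A+E)$ with $\|E\|\le\epsilon$, pick a unit vector $v$ in the kernel of $z^{\hat\alpha}I-(A+E)$; then $(z^{\hat\alpha}I-A)v = Ev$, whence $\|(z^{\hat\alpha}I-A)v\|\le\|E\|\le\epsilon$. The other direction is the main technical point: given a unit vector $v$ with $u:=(z^{\hat\alpha}I-A)v$ satisfying $\|u\|\le\epsilon$, I need to manufacture $E\in M_n(\mathbb C)$ with $\|E\|\le\epsilon$ and $Ev=u$, so that $(z^{\hat\alpha}I-(A+E))v=0$. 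For this I would invoke the Hahn--Banach theorem to produce a linear functional $\phi$ on $\mathbb C^n$ with $\phi(v)=1$ and $\|\phi\|_*=1$ (dual norm), and then define the rank-one perturbation $Ew := \phi(w)\,u$. Then $Ev=u$ and $\|E\| = \|\phi\|_*\|u\| = \|u\|\le\epsilon$, as required.

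The main obstacle is the construction of the minimal-norm perturbation $E$ in the last step, because the norm on $\mathbb C^n$ is arbitrary and we cannot rely on the explicit inner-product formula $E = u v^*$ that works in the $\ell^2$ case. The Hahn--Banach-based construction above circumvents this uniformly for any norm. Note that once both equivalences are established, the definition itself provides that the set $\sigma_{\hat\alpha,\epsilon}(A)$ automatically contains $\sigma_{\hat\alpha}(A)$ (by the convention $\|M^{-1}\|^{-1}=0$ whenever $\det M=0$), so no separate treatment of the singular points of the resolvent is needed in the two reformulations.
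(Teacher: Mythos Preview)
Your argument is correct and is precisely the standard proof of the equivalence of pseudospectrum characterizations; the Hahn--Banach construction of the rank-one perturbation is the right device for an arbitrary norm and specializes to the familiar $E=uv^*$ in the Euclidean case. The paper itself does not prove this proposition but simply refers to \cite[Theorem~2.3]{Kostic} and \cite[Theorem~2.1]{Trefethen}, where essentially the same reasoning is carried out.
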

	
	\begin{proof} 
		See \cite[Theorem 2.3, p.\ 249]{Kostic} or \cite[Theorem 2.1, p.\ 16]{Trefethen}. 
	\end{proof}
	
	\begin{theorem}[$\hat\alpha$-fractional $\epsilon$-pseudo Ger\v sgorin sets]
		\label{1}
		Let $A \in M_n(\mathbb R)$ and $\hat \alpha \in (0,1]^n$ and consider the norm $\| \cdot \|_{\infty}$. 
		For any $\epsilon > 0$, we have
		\[
			\sigma_{\hat\alpha, \epsilon}^\infty(A)
				\subset \bigcup_{i \in N} 
				 \left\{z \in \mathbb C  : |a_{ii} - z^{\alpha_i}| \leq  r_i(A)
				 	 + \epsilon  \right\}
		\]
		where $r_i(A) =  \sum_{j\in N, j \neq i} |a_{ij}|$.
	\end{theorem}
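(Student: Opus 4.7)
The plan is to derive the inclusion by mimicking the classical Ger\v sgorin argument, replacing ``eigenvector'' with the near\nobreakdash-eigenvector guaranteed by Proposition \ref{dngp}.

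First, I take an arbitrary $z \in \sigma_{\hat\alpha,\epsilon}^\infty(A)$ and invoke the characterization \eqref{eq:pseudo-iii} to obtain a vector $v = (v_1,\ldots,v_n)^{\mathrm T} \in \mathbb C^n$ with $\|v\|_\infty = 1$ and $\|(z^{\hat\alpha} I - A)v\|_\infty \leq \epsilon$. Pick an index $i \in N$ attaining the supremum of $|v_k|$, so that $|v_i| = 1$ and $|v_j| \leq 1$ for every $j \neq i$.

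Next, I examine the $i$-th coordinate of $(z^{\hat\alpha}I - A)v$, which equals $(z^{\alpha_i} - a_{ii})v_i - \sum_{j \neq i} a_{ij} v_j$. Its modulus is bounded by $\epsilon$, since the whole vector has $\infty$-norm at most $\epsilon$. Applying the triangle inequality in the form
\[
	|z^{\alpha_i} - a_{ii}|\,|v_i|
		\leq \Bigl|(z^{\alpha_i}-a_{ii})v_i - \sum_{j\neq i} a_{ij} v_j \Bigr|
			+ \sum_{j\neq i} |a_{ij}|\,|v_j|,
\]
together with $|v_i|=1$ and $|v_j|\leq 1$, yields $|z^{\alpha_i} - a_{ii}| \leq \epsilon + r_i(A)$. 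Hence $z$ lies in the $i$-th set of the claimed union, proving the inclusion.

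There is no serious obstacle here: once one recognizes that \eqref{eq:pseudo-iii} plays the role of the eigenvalue equation, the argument is a direct imitation of the proof of the classical Ger\v sgorin disc theorem, with the extra slack $\epsilon$ introduced by the approximate rather than exact null vector. The only point to be slightly careful about is the interpretation of $z^{\alpha_i}$, which is unambiguous thanks to the principal-branch convention fixed in the preliminaries.
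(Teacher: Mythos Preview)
Your argument is correct: invoking the characterization \eqref{eq:pseudo-iii}, selecting the index where $|v_i|$ attains its maximum, and applying the triangle inequality to the $i$-th component of $(z^{\hat\alpha}I-A)v$ is exactly the classical Ger\v sgorin reasoning adapted to the fractional pseudospectrum. The paper itself does not give a proof but merely cites \cite[Theorem~3.1]{Kostic}; your write-up is the natural direct argument and almost certainly coincides with the one given there.
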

	
	\begin{proof}
		See \cite[Theorem 3.1, p.\ 251]{Kostic}.
	\end{proof}
	
	\begin{remark}
		Taking the limit $\epsilon \to 0$, it follows from Definition \ref{def:pseudospectra} 
		and Theorem \ref{1} that
		\begin{align*}
			\sigma_{\hat\alpha}(A)\subset \bigcup_{i \in N}  
				\left\{z \in \mathbb C: 
					|a_{ii} - z^{\alpha_i}| \leq  r_i(A)  \right\}.
		\end{align*}
		Thus, if $A$ is a diagonally dominant matrix with negative elements on the main diagonal, 
		then $\sigma_{\hat\alpha}(A) \subset \mathbb C_{-}$ for all $\hat \alpha \in (0,1]^n$. 
		As shown in \cite{Tuan}, this implies that the associated linear differential equation system 
		with orders $\hat \alpha$ and the constant coefficient matrix $A$ (as given in eq.\ \eqref{eq1} 
		below) is asymptotically stable.
	\end{remark}
	
	Due to the fact that all norms on $M_n(\mathbb C)$ are equivalent, for specificity and 
	convenience of presentation, from now on we will only state and prove the results for the norm $\|\cdot\|_2$. 
	
	\begin{theorem}[Euclidean $\hat\alpha$-fractional $\epsilon$-pseudo Ger\v sgorin sets]
		\label{dlgp}
		For given $A \in M_n(\mathbb R)$, $\hat\alpha \in (0,1]^n$ and $\epsilon > 0$, we have
		\[
			\sigma_{\hat\alpha, \epsilon}^2(A)
				\subset \bigcup_{i \in N}  \left\{z \in \mathbb C : |a_{ii} - z^{\alpha_i}| 
									\leq \max\{r_i(A), r_i(A^{\mathrm T})\} + \epsilon  \right\}
		\]
	\end{theorem}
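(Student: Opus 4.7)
The plan is to reduce the Euclidean $\hat\alpha$-fractional $\epsilon$-pseudo Ger\v{s}gorin statement to the $\ell^\infty$-case already established in Theorem~\ref{1}, applied both to $A$ and to its transpose $A^{\mathrm T}$. The underlying idea is that the $\|\cdot\|_2$ pseudospectrum sits inside the union of the $\|\cdot\|_1$- and $\|\cdot\|_\infty$-pseudospectra, and the $\|\cdot\|_1$-pseudospectrum of $A$ coincides with the $\|\cdot\|_\infty$-pseudospectrum of $A^{\mathrm T}$.

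First I would establish the inclusion
\[
\sigma_{\hat\alpha,\epsilon}^2(A) \;\subset\; \sigma_{\hat\alpha,\epsilon}^\infty(A) \,\cup\, \sigma_{\hat\alpha,\epsilon}^1(A)
\]
via the standard interpolation inequality $\|X\|_2 \leq \sqrt{\|X\|_1 \|X\|_\infty}$ valid for every $X \in M_n(\mathbb C)$, which follows from $\|X\|_2^2 = \rho(X^*X) \leq \|X^*X\|_\infty \leq \|X^*\|_\infty \|X\|_\infty = \|X\|_1 \|X\|_\infty$. Applied to $X = (z^{\hat\alpha}I - A)^{-1}$ in the invertible case, the condition $\|X\|_2 \geq 1/\epsilon$ forces $\|X\|_1 \cdot \|X\|_\infty \geq 1/\epsilon^2$, so at least one of $\|X\|_1^{-1} \leq \epsilon$ or $\|X\|_\infty^{-1} \leq \epsilon$ must hold; the singular case is swept up by the convention $\|M^{-1}\|^{-1} = 0$, which places $z$ in all three pseudospectra simultaneously.

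Next I would observe that $\sigma_{\hat\alpha,\epsilon}^1(A) = \sigma_{\hat\alpha,\epsilon}^\infty(A^{\mathrm T})$. This follows from the identity $\|M^{-1}\|_1 = \|(M^{\mathrm T})^{-1}\|_\infty$ (a direct consequence of $\|M\|_1 = \|M^{\mathrm T}\|_\infty$) combined with $(z^{\hat\alpha}I - A)^{\mathrm T} = z^{\hat\alpha}I - A^{\mathrm T}$, which is valid because $z^{\hat\alpha}I$ is diagonal. Applying Theorem~\ref{1} to $A$ and to $A^{\mathrm T}$ separately, and using that $(A^{\mathrm T})_{ii} = a_{ii}$ while the row sums of $A^{\mathrm T}$ are exactly $r_i(A^{\mathrm T})$, then yields
\[
\sigma_{\hat\alpha,\epsilon}^\infty(A) \subset \bigcup_{i\in N}\bigl\{z \in \mathbb C : |a_{ii} - z^{\alpha_i}| \leq r_i(A) + \epsilon\bigr\},
\]
\[
\sigma_{\hat\alpha,\epsilon}^\infty(A^{\mathrm T}) \subset \bigcup_{i\in N}\bigl\{z \in \mathbb C : |a_{ii} - z^{\alpha_i}| \leq r_i(A^{\mathrm T}) + \epsilon\bigr\}.
\]
Taking the union coming from the first step and merging the two disks attached to each index $i$ produces precisely the claimed enclosure with radii $\max\{r_i(A), r_i(A^{\mathrm T})\} + \epsilon$.

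The argument is mostly bookkeeping; the only points that require care are recognizing that the interpolation inequality must be applied to the inverse $(z^{\hat\alpha}I - A)^{-1}$ rather than to $z^{\hat\alpha}I - A$ itself, and keeping the row-versus-column conventions straight when passing from $A$ to $A^{\mathrm T}$. No analytical content beyond Theorem~\ref{1} and elementary matrix norm identities is needed.
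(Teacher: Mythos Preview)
Your argument is correct. The paper itself does not supply a proof of this theorem at all; its ``proof'' consists solely of a citation to \cite[Theorem~3.4]{Kostic}. So there is no in-paper argument to compare against, and what you have written is a genuine, self-contained proof where the paper offers none.

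Your route---the interpolation bound $\|X\|_2^2 \le \|X\|_1\|X\|_\infty$ applied to $X=(z^{\hat\alpha}I-A)^{-1}$ to force membership in $\sigma_{\hat\alpha,\epsilon}^\infty(A)\cup\sigma_{\hat\alpha,\epsilon}^1(A)$, followed by the duality $\sigma_{\hat\alpha,\epsilon}^1(A)=\sigma_{\hat\alpha,\epsilon}^\infty(A^{\mathrm T})$ and two invocations of Theorem~\ref{1}---is clean and entirely elementary. Each step checks out: the spectral-radius argument for the interpolation inequality is standard, the transpose identity $(z^{\hat\alpha}I-A)^{\mathrm T}=z^{\hat\alpha}I-A^{\mathrm T}$ holds because the diagonal part is symmetric, and the final merging of the two Ger\v{s}gorin disks centred at the same $a_{ii}$ into one with the larger radius is immediate. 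The handling of the singular case via the convention $\|M^{-1}\|^{-1}=0$ is also correctly noted.
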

	
	\begin{proof} 
		See \cite[Theorem 3.4, pp.\ 260]{Kostic}.
	\end{proof}

	\begin{remark}
		When applying Theorem \ref{dlgp}, it is helpful to remember the immediately obvious relation
		$r_i(A^{\mathrm T}) =  \sum_{j\in N, j \neq i} |a_{ji}|$. 
	\end{remark}

\section{The $\hat{\alpha}$-order spectrum: The case $\hat\alpha \in \left( (0,1] \cap \mathbb Q \right)^n$}
	\label{s3} 
	
	Let $\hat\alpha = (\alpha_1, \ldots \alpha_n) \in (0,1]^n$. Then we initially consider the system
	\begin{align}\label{eq1}
		^C D^{\hat\alpha}_{0^+} x(t) & = Ax(t), \quad t > 0,\\
		\label{eq:ic}
		x(0) &= x^0\in\R^n,
	\end{align}
	with some $A \in M_n(\R)$.
	Following \cite{Stanis}, we shall first discuss our problem for the case that all orders 
	$\alpha_i$ are rational numbers and defer
	the extension to irrational values of $\alpha_i$ until Section \ref{s4}.
	Thus, in this section we assume that $\alpha_i \in \mathbb Q$ for all $i \in N$, 
	and so we have $\alpha_i = \frac{q_i}{m_i}$
	with some $q_i, m_i \in \mathbb N$ (assumed to be in lowest terms) for all $i \in N$. 
	Let $m$ be the least common multiple of $m_1, \ldots, m_n$ 
	and $\gamma = 1/m \in (0,1]$. Then, 
	\begin{align*}
		z^{\hat\alpha} I - A 
		& = \diag(z^{\alpha_1}, \ldots, z^{\alpha_n}) - A \\
		& = \diag(z^{\frac{q_1}{m_1}}, \ldots,  z^{\frac{q_n}{m_n}}) - A \\
		& = \diag( z^{\frac{p_1}{m}}, \ldots,  z^{\frac{p_n}{m}}) - A \\
		& = \diag( (z^\gamma)^{p_1}, \ldots, (z^\gamma)^{p_n}) - A
	\end{align*}
	where $p_i = q_i \frac{m}{m_i} \in \mathbb N$ for $i = 1, 2, \ldots, n$.  Writing
	$s=z^\gamma$,
	we obtain 
	\begin{equation}
		\label{eq:det-z-det-s}
		\det (z^{\hat\alpha} I - A) = \det (s^{\hat p} I - A)
	\end{equation}
	where $\hat p = (p_1, \ldots, p_n)^{\rm T} \in \mathbb N^n$.
	
	Since $s = z^\gamma$ in eq.\ \eqref{eq:det-z-det-s}, it is clear that $\arg(s) \in (-\gamma \pi, \gamma \pi]$. 
	Therefore, to analyze the zeros of the expression on the right-hand side of eq.\ \eqref{eq:det-z-det-s},
	it is necessary to discuss the set
	\begin{equation}
		\label{eq:def-sigmatilde}
		\tilde \sigma_{\hat p}^{(\gamma)} (A) 
		= \{s \in \mathbb C : \arg (s) \in (-\gamma \pi, \gamma \pi] \text{ and } \det (s^{\hat p} I - A) = 0 \}.
	\end{equation}
	In this context, we then see that we 
	have $\sigma_{\hat \alpha} (A) \subset \mathbb C_{-}$ if and only if 
	$\tilde \sigma_{\hat p}^{(\gamma)} (A) \;{\subset}\; \Omega_\gamma$
	where 
	\begin{equation}
		\label{eq:omegagamma}
		\Omega_\gamma 
		= \left \{ z \in \mathbb C \setminus \{ 0 \} :
				 |\arg(z)| > \gamma \frac{\pi}{2}, -\gamma \pi < \arg(z) \leq \gamma \pi \right \} .
	\end{equation}
	
	In view of eq.~\eqref{eq:def-sigmatilde}, it is thus of interest to compute $\det (s^{\hat p} I - A)$. First of all, 
	for each set $\{ i_1, i_2, \ldots, i_r \}$ with $1 \leq i_1 < \ldots < i_r \leq n$ and $1 \leq r \leq n$, 
	we will determine the coefficient of each monomial $s^{p_{i_1}}\cdots s^{p_{i_r}}$
	in the expansion of $\det (s^{\hat p} I - A)$. 
	Note that in this expansion we will treat $p_1, p_2, \ldots, p_n$ as formal variables, 
	i.e., $s^{p_i}s^{p_j} \neq s^{p_j}s^{p_i}$ with
	every $i \neq j$. We then have
	\begin{align*}
		\det (s^{\hat p} I - A) &= \det \begin{pmatrix}
							s^{p_1} - a_{11} & -a_{12} & \ldots & -a_{1n} \\
							-a_{21} &s^{p_2} - a_{22}  &  \ldots& -a_{2n} \\
							\vdots& \vdots & \ddots & \vdots \\
							-a_{n1}& -a_{n2}  &\ldots  &s^{p_n} -a_{nn}  \\
					\end{pmatrix} \\
				&= s^{p_{i_1}}\Delta^A_{(i_1;i_1)} - \sum_{j=1}^{n}(-1)^{i_1+j}a_{i_1j}\Delta^A_{(i_1;j)}
	\end{align*}
	where $\Delta^A_{(i_1;j)}$, $j \in N $, is the determinant of the matrix obtained from $s^{\hat p} I - A$ 
	by removing the $i_1$-th row and the $j$-th column. 
	It is easy to see that the term $s^{p_{i_1}}\cdots s^{p_{i_r}}$ 
	only appears in $s^{p_{i_1}}\Delta^A_{(i_1;i_1)}$. Therefore, the coefficient of 
	$s^{p_{i_1}}\cdots s^{p_{i_r}}$ 
	in the expansion $s^{\hat p} I - A$ is equal to the coefficient of 
	$s^{p_{i_1}}\cdots s^{p_{i_r}}$ in the expansion of 
	$s^{p_{i_1}}\Delta^A_{(i_1;i_1)}$. Moreover, 
	\begin{align*}
		s^{p_{i_1}}\Delta^A_{(i_1;i_1)}
		&= s^{p_{i_1}}s^{p_{i_2}}\Delta^A_{(i_1,i_2;i_1,i_2)} 
			- \sum_{j \in  N ,j \neq i_1}(-1)^{i_2+j}a_{i_2j}\Delta^A_{(i_1,i_2;i_1,j)}
	\end{align*}
	with $\Delta^A_{(i_1,i_2;i_1,j)}$, $j \in N$, $j \neq i_1$, being the determinant of the matrix 
	obtained from $s^{\hat p} I - A$ by removing  
	the rows $i_1, i_2$ and the columns $i_1, j$. Due to the fact that the term 
	$s^{p_{i_1}}s^{p_{i_2}}\cdots s^{p_{i_r}}$ 
	only appears in $s^{p_{i_1}}s^{p_{i_2}}\Delta^A_{(i_1,i_2;i_1,i_2)}$, the coefficient of 
	$s^{p_{i_1}}\cdots s^{p_{i_r}}$ in the expansion of $s^{\hat p} I - A$ is equal to the coefficient of 
	$s^{p_{i_1}}\cdots s^{p_{i_r}}$  in the expansion of
	$s^{p_{i_1}}s^{p_{i_2}}\Delta^A_{(i_1,i_2;i_1,i_2)}$.
	
	Repeating the above process, we see that the coefficient of $s^{p_{i_1}} \cdots s^{p_{i_r}}$ 
	in the expansion of $s^{\hat p} I - A$ 
	is the constant term in the expansion of $\Delta^A_{(i_1,i_2,\ldots,i_r; i_1,i_2, \ldots, i_r)}$ 
	which is the determinant of the matrix obtained from the matrix $s^{\hat p} I - A$ by removing the 
	rows $i_1, i_2, \ldots, i_r$ and the columns $i_1, i_2, \ldots, i_r$. Put
	\begin{align}\label{2.1}
		b_k = \begin{cases}
			1 & \text{ if } k = p_1 + \ldots + p_n, \\
			0 & \text{ if } k\neq p_{i_1}  + \ldots + p_{i_r}, \\
				& \phantom{\text{ if }} 1\leq i_1 < \ldots < i_r \leq n, 1 \leq r \leq n, \\
			\underset{1\leq i_1  < \ldots < i_r \leq n}{\sum} (-1)^{n-r} \det A_{(i_1, \ldots, i_r)}
				& \text{ if } k= p_{i_1} + \ldots + p_{i_r}, 1 \leq r \leq n, \\
			(-1)^n\det A & \text{ if } k = 0
		\end{cases}
	\end{align}
	where $A_{(i_1, \ldots, i_r)}$ is the matrix obtained from the matrix $A$
	by removing the $r$ rows $i_1, \ldots, i_r$ and the $r$ columns $i_1, \ldots, i_r$.
	Then, $\det (s^{\hat p} I - A) = \sum_{k=0}^{p_1 + \ldots + p_n} b_k s^k$. 

	\begin{theorem} \label{dl1} 
		Let $A \in M_n(\mathbb R)$ and $\hat\alpha = 
		(\alpha_1, \ldots, \alpha_n) \in \left( (0,1] \cap \mathbb Q \right)^n$.
		For each $i \in N$, let $\alpha_i = q_i/m_i$ with some $q_i, m_i \in \mathbb N$ 
		(in lowest terms). Let $m$ the least common multiple of $m_1, m_2, \ldots, m_n$, 
		$\hat p: = (p_1, p_2, \ldots, p_n)$ with $p_i = q_i \frac{m}{m_i}$, $\gamma: = \frac{1}{m}$ and
		$$ 
			B := \begin{pmatrix}
				0 & 0 & \cdots & 0 & -b_0 \\
				1  & 0 & \cdots &0  &-b_1  \\
				0 &1  &   &0  &-b_2  \\
				\vdots   &    & \ddots  & \vdots  & \vdots  \\
				0 & 0 & \cdots & 1 & -b_{p_1+p_2+\ldots +p_n -1} \\
			\end{pmatrix} 
		$$
		where $b_k$ is defined as in \eqref{2.1}. 
		Then, $\sigma_{\hat \alpha}(A) \subset \mathbb C_{-}$ if and only if 
		$\tilde \sigma^\gamma (B) \subset \Omega_\gamma$,
		where $\tilde \sigma^\gamma (B) := \{ s \in \mathbb C, 
		-\gamma \pi < \arg(s) \leq \gamma \pi : \det (s I - B) = 0 \}$ 
		and $\Omega_\gamma$ is as in eq.~\eqref{eq:omegagamma}.
	\end{theorem}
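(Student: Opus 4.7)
The plan is to combine the polynomial identity already derived in the paragraph preceding the theorem with the companion-matrix structure of $B$, and then to use the principal branch of $z \mapsto z^\gamma$ to translate the half-plane condition on $z$ into a sector condition on $s = z^\gamma$.

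First, I would invoke the computation immediately above the theorem, which yields $\det(s^{\hat p} I - A) = \sum_{k=0}^{P} b_k s^k$ with $P := p_1 + \cdots + p_n$ and $b_P = 1$ by construction. Since the coefficients $b_k$ are precisely those appearing in the last column of $B$, and $B$ is a Frobenius companion matrix of a monic polynomial of degree $P$, standard linear algebra gives $\det(s I - B) = \sum_{k=0}^{P} b_k s^k$. Combined with the identity $\det(z^{\hat\alpha} I - A) = \det(s^{\hat p} I - A)$ for $s = z^\gamma$, this produces the key relation
\[
    \det(z^{\hat\alpha} I - A) = \det(z^\gamma I - B) \qquad \text{for all } z \in \mathbb{C}.
\]

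Next, I would exploit the principal-branch convention from Section~\ref{s2}: the map $z \mapsto s = z^\gamma$ is a bijection from $\mathbb{C} \setminus \{0\}$ onto $\{ s \in \mathbb{C} \setminus \{0\} : -\gamma\pi < \arg(s) \leq \gamma\pi\}$, with $\arg(s) = \gamma \arg(z)$. Under this bijection, the condition $z \in \mathbb{C}_{-}$, i.e.\ $|\arg(z)| > \pi/2$, transforms into $|\arg(s)| > \gamma \pi / 2$, which together with the branch restriction is precisely the defining property of $\Omega_\gamma$ in eq.~\eqref{eq:omegagamma}. Combining the two steps gives: $z \in \sigma_{\hat\alpha}(A) \setminus \{0\}$ iff $s = z^\gamma$ is a nonzero root of $\det(sI - B) = 0$ lying in the strip $\arg(s) \in (-\gamma\pi, \gamma\pi]$, i.e.\ iff $s \in \tilde\sigma^\gamma(B) \setminus \{0\}$, and the equivalence of the two inclusion statements follows.

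The main, and essentially only, obstacle is careful bookkeeping at the origin and the branch cut. One must verify that $0 \in \sigma_{\hat\alpha}(A)$ iff $b_0 = 0$ iff $0 \in \tilde\sigma^\gamma(B)$ (which holds because $z^{\alpha_i}\to 0$ for $\alpha_i>0$, so $\det(0 \cdot I - A) = (-1)^n \det A = b_0$), and then observe that $0$ lies in neither $\mathbb{C}_{-}$ nor $\Omega_\gamma$, so both set inclusions fail simultaneously whenever $\det A = 0$. A separate sanity check is that $\tilde\sigma^\gamma(B)$ captures exactly the portion of $\sigma(B)$ visible to the principal branch, so that spurious roots of $\det(sI-B)$ outside the strip $\arg(s) \in (-\gamma\pi,\gamma\pi]$ do not contaminate the equivalence.
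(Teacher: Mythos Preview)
Your proposal is correct and follows essentially the same route as the paper: both arguments rest on the companion-matrix identity $\det(sI-B)=\sum_{k=0}^{P}b_ks^k$, the precomputed equality $\det(z^{\hat\alpha}I-A)=\det(s^{\hat p}I-A)$, and the substitution $s=z^\gamma$. If anything, you are more careful than the paper, which simply records the chain $\det(z^{\hat\alpha}I-A)=P(s)=\det(sI-B)$ and asserts the equivalence without separately discussing the origin or the branch-cut bijection.
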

	
	\begin{remark}
		The question that we are interested in is to figure out whether or not a given incommensurate
		fractional order differential equation system is asymptotically stable. Recall that the classical
		criteria to establish whether or not this is true \cite{Deng} require us to find out the zeros of
		the fractional characteristic function $\det(z^{\hat \alpha} I - A)$ which is a computationally
		difficult task for which no general algorithms seem to be readily available. Our new
		Theorem~\ref{dl1} reduces this problem to finding the eigenvalues (in the classical sense) 
		of the matrix $B$. We have described an explicit method for computing this matrix,
		and it is clear that $B$ is sparse and has a very clear structure in the positioning of its
		nonzero entries. Therefore, the effective calculation of its eigenvalues may be done with 
		standard algorithms from linear algebra, thus leading to a straightforward solution of the 
		problem at hand.
	\end{remark}
	
	\begin{proof} 
		Put $P(s) = \sum_{k=0}^{p_1 + \ldots + p_n} b_k s^k$ and $s = z^{1/m}$. Then,  
		$$ 
			\det (z^{\hat\alpha} I - A) = \det (s^{\hat p} I - A) 
			= \sum_{k=0}^{p_1 + \ldots + p_n} b_k s^k = P(s) = \det (s I - B). 
		$$
		This implies that
		$$
			\det (z^{\hat\alpha} I - A) = 0 \Leftrightarrow \det (s I - B) = 0. 
		$$ 
		Thus, $\sigma_{\hat \alpha}(A) \subset \mathbb C_{-}$ 
		if and only if $\tilde \sigma^\gamma (B) \subset \Omega_\gamma$.
	\end{proof}

	\begin{remark}
		Notice that the region $|\arg(s)| > \gamma \pi$ is not
		physical which implies (keeping in mind the convention $s = z^{1/m} = z^\gamma$)
		that any root in this area of the $s$-plane 
		does not have a corresponding root in the area $-\pi<\arg(z)\leq \pi$ of the $z$-plane,
		see \cite[Subsection 2.1]{Radwan}. So, from Theorem \ref{dl1} above, 
		we actually have $\sigma_{\hat \alpha}(A) \subset \mathbb C_{-}$ if and only if 
		$\sigma (B) \subset {\tilde \Omega}_\gamma$ where 
		\[
			{\tilde \Omega}_\gamma
			:= 
			\{z \in \mathbb C\setminus\{0\} : |\arg(z)| > \gamma \frac{\pi}{2}, -\pi < \arg(z) \leq \pi \}.
		\]
	\end{remark}
	
	\begin{remark}
		When studying the asymptotic behaviour of mixed fractional order linear systems 
		where the fractional orders are rational, one can use a different approach than that 
		presented here, see \cite[Subsection 3.2]{Tuan2017}. 
		In particular, by using the semi-group property (see \cite[Chapter 8]{Kai} 
		and \cite[Subsection 4.1]{Cong23}), one can transform the original system 
		into a new equivalent system in which all fractional orders are identical to each other. 
		However, the disadvantage 
		of that approach is that the size of the derived system is often very large. In addition,
		an obvious relationship between the coefficient matrix of the original system and the 
		coefficient matrix of the derived system does not seem to be readily available.
	\end{remark}
	
	\begin{remark}
		We note that a statement similar to Theorem 3.1 was shown in the 
		survey paper by Petráš \cite[Theorem 4]{Petras09}. Our contribution here is to explicitly 
		calculate the coefficients of the characteristic polynomial $\det (s I - B)$ mentioned above 
		and clarify the proof of that result.
	\end{remark}
	
	\begin{example}\label{vd1}
		Consider the system \eqref{eq1} with 
		\begin{equation}
			\label{eq:ex1}
			A = \begin{pmatrix}
				-0.5&-0.2  &-0.15  &0.25  \\
				0.15&-0.4  &0.2  &-0.15  \\
				0.25& 0.15  &-0.6  &0.3  \\
				0.2& -0.1 &-0.1  &-0.3  \\
			\end{pmatrix} 
		\end{equation}
		and $\hat\alpha = (\frac{1}{2}, \frac{1}{4}, \frac{1}{3}, \frac{1}{6})$. 
		Then, we obtain $\gamma = \frac{1}{12}$ and $\hat p = (p_1, p_2, p_3, p_4) = (6, 3, 4, 2)$.
		
		By a direct computation, we have
		\begin{align*}
				2 &= p_4 &\hspace{3 cm} 9 &= p_1 + p_2 = p_2 + p_3 + p_4  \\
				3 &= p_2   &\hspace{3 cm} 10 &= p_1 + p_3 \\
				4 &=p_3   &\hspace{3 cm}  11 &= p_1 + p_2 + p_4 \\
				5 &= p_2 + p_4   &\hspace{3 cm}   12 &= p_1 + p_3 + p_4 \\
				6 &= p_1 = p_3 + p_4  &\hspace{3 cm}  13 &= p_1 + p_2 + p_3\\
				7 &=p_2 + p_3  &\hspace{3 cm} 15 &= p_1 + p_2 + p_3 + p_4\\
				{8} &{\; =p_1+p_4}
		\end{align*} 
		and thus $b_1 = b_{14} = 0$ and 
		\begin{align*}
			b_0 &= \det A =\frac{3759}{80000}, &
			b_2 &= -\det A_{(4)} =\frac{1211}{8000}, \\
			b_3 &= -\det A_{(2)} = \frac{203}{2000}, &
			b_4 &= -\det A_{(3)} = \frac{157}{4000},\\
			b_5 &=  \det A_{(2,4)} = \frac{27}{80}, &
			b_6 &= -\det A_{(1)} + \det A_{(3,4)} = \frac{1199}{4000},\\
			b_7 &= \det A_{(2,3)} = \frac{1}{10}, &
			b_8 &= \det A_{(1,4)} = \frac{21}{100},\\
			b_9 &= \det A_{(1,2)} - \det A_{(2,3,4)} = \frac{71}{100}, &
			b_{10} &= \det A_{(1,3)} = \frac{21}{200},\\
			b_{11} &= -\det A_{(1,2,4)} = \frac{3}{5}, &
			b_{12} &= - \det A_{(1,3,4)} = \frac{2}{5}, \\
			b_{13} &=  \det A_{(1,2,3)} = \frac{3}{10},&
			b_{15} &=1.
		\end{align*}
		Hence,
		$$ 
			B = \begin{pmatrix}
				0 & 0 & \ldots & 0 & -\frac{3759}{80000} \\
				1  & 0 & \ldots &0  &0 \\
				0 &1  &\ldots  &0  &-\frac{1211}{8000} \\
				\vdots   & \vdots  & \ddots  & \vdots  & \vdots  \\
				0 & 0 & \ldots & 1 & 0 \\
			\end{pmatrix} .
		$$
		The eigenvalues of $B$ and their arguments are 
		\begin{align*}
			\lambda_1 & \approx  -0.7521, & 	|\arg {(\lambda_1)}| & = \pi, \\
			\lambda_{2,3} & \approx -0.7822 \pm  0.4462 \iu, & |\arg {(\lambda_2)}|  = |\arg {(\lambda_3)}| & \approx 2.62319, \\ 
			\lambda_{4,5} & \approx -0.6400 \pm  0.6365 \iu, & |\arg {(\lambda_4)}|  = |\arg {(\lambda_5)}| & \approx 2.35894, \\ 
			\lambda_{6,7} & \approx -0.0087 \pm  0.9241 \iu, & |\arg {(\lambda_6)}|  = |\arg {(\lambda_7)}| & \approx 1.58021, \\ 
			\lambda_{8,9} & \approx \phantom{-}  0.7830 \pm  0.4217\iu, & |\arg {(\lambda_8)}|  = |\arg {(\lambda_9)}| & \approx 0.49402, \\ 
			\lambda_{10,11} & \approx \phantom{-} 0.6395 \pm 0.6446\iu, & |\arg {(\lambda_{10})}|  = |\arg {(\lambda_{11})}| & \approx 0.78937, \\  
			\lambda_{12,13} & \approx \phantom{-} 0.3861 \pm  0.6567\iu, & |\arg {(\lambda_{12})}|  = |\arg {(\lambda_{13})}| & \approx 1.03929, \\ 
			\lambda_{14,15} & \approx  -0.0017 \pm  0.5409 \iu, &   |\arg {(\lambda_{14})}| = |\arg {(\lambda_{15})}| & \approx 1.57393.
		\end{align*}
%		By a simple computation, we have
%		\begin{align*} 
%			|\arg {(\lambda_1)}| & = \pi, \\ 
%			|\arg {(\lambda_2)}|  = |\arg {(\lambda_3)}| & \approx 2.62319, \\ 
%			|\arg {(\lambda_4)}|  = |\arg {(\lambda_5)}| & \approx 2.35894, \\ 
%			|\arg {(\lambda_6)}|  = |\arg {(\lambda_7)}| & \approx 1.58021, \\
%			|\arg {(\lambda_8)}|  = |\arg {(\lambda_9)}| & \approx 0.49402, \\ 
%			|\arg {(\lambda_{10})}|  = |\arg {(\lambda_{11})}| & \approx 0.78937, \\
%			|\arg {(\lambda_{12})}|  = |\arg {(\lambda_{13})}| & \approx 1.03929, \\ 
%			|\arg {(\lambda_{14})}| = |\arg {(\lambda_{15})}| & \approx 1.57393.
%		\end{align*} 
		This implies that $|\arg {(\lambda_i})| > \pi / 24$ for all $i = 1, \ldots, 15$.
		By Theorem~\ref{dl1}, we conclude that $\sigma_{\hat \alpha}(A) \subset \mathbb C_{-}$. 
		Thus, in this case, the system \eqref{eq1} is asymptotically stable by \cite[Theorem~1]{Deng}.
		Figure \ref{refhinh1} illustrates  this property by showing the solution to the system for 
		a certain choice of the initial value vector. In particular for $x_2$ and $x_3$, 
		one needs to compute the solutions over a very long time interval before one can 
		actually notice that the components tend to zero.
	\end{example}

	\begin{figure}[htb]
			\includegraphics[height=0.42\textwidth]{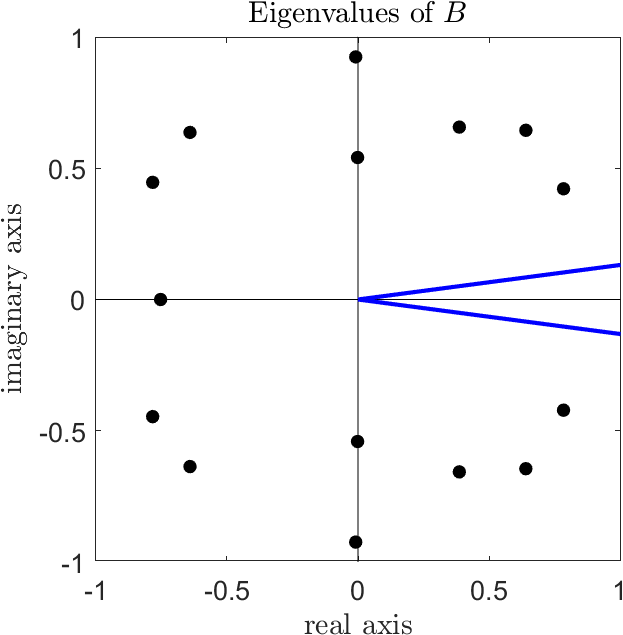}
			\hfill
			\includegraphics[height=0.42\textwidth]{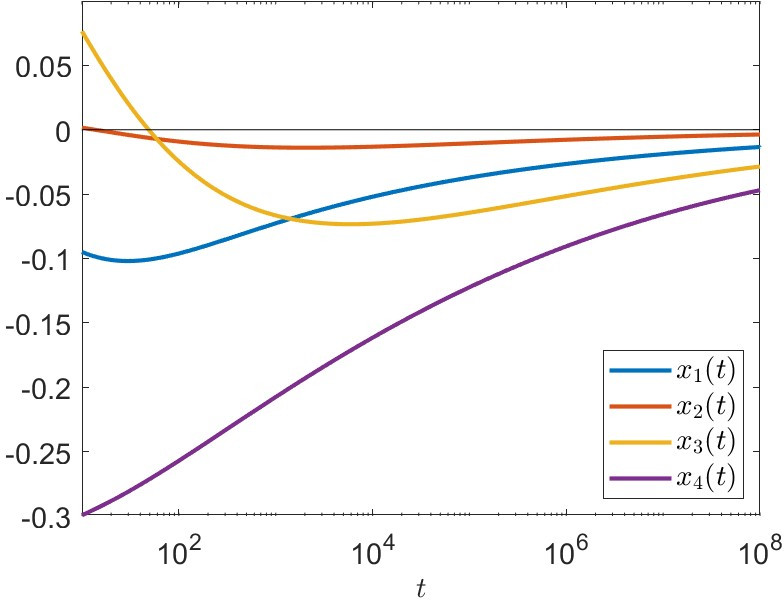}%{Example3-6_a_50000_logaxis}
		\caption{\emph{Left:} Location of the eigenvalues of the matrix $B$ from Example \ref{vd1} in the complex plane.
			The blue rays are oriented at an angle of $\pm \gamma \pi / 2 = \pm \pi/24$ from the 
			positive real axis and hence indicate the boundary of the critical sector 
			$\{ z \in \mathbb C : |\arg z | \le \gamma \pi /2\}$.
			Since all eigenvalues are outside of this sector, we can derive the asymptotic stability of the system.
		\emph{Right:} Trajectories of the solution of the system \eqref{eq1} 
		discussed in Example \ref{vd1} where 
		$\hat\alpha=(\frac{1}{2}, \frac{1}{4}, \frac{1}{3}, \frac{1}{6})$ 
		and the matrix $A$ is given in eq.~\eqref{eq:ex1} when the initial condition \eqref{eq:ic}
		is chosen as $x_0= (0.1, -0.1, 0.5, -0.4)^{\mathrm T}$. Note that the horizontal axis
		is displayed in a logarithmic scale.}
		\label{refhinh1}
	\end{figure}
		
	\begin{remark}
		\label{rmk:numsol}
		The solution of Example \ref{vd1} shown in the right part of Figure \ref{refhinh1} 
		has been computed numerically with Garrappa's implementation
		of the implicit product integration rule of trapezoidal type \cite{Ga2018}.
		It has been shown in \cite[Section 5]{Ga2015} that the stability properties of 
		this method are sufficient to numerically reproduce the stability of the exact solution.
		The step size {here was chosen as $h = 1$}.
		We have also used this algorithm {(but not always the same step size)} 
		for all other examples in the remainder of this paper.
	\end{remark}

\section{The $\hat\alpha$-order spectrum: The case $\hat{\alpha} \in (0,1]^n$}
\label{s4}
	
	Now we generalize our considerations to the case of systems of fractional differential equations 
	with arbitrary (not necessarily rational) orders. To this end, we first devise a strategy for replacing the 
	original (potentially irrational) orders by nearby rational numbers (see Subsection \ref{subs:rat-approx}). 
	The resulting problem
	can then be handled with the approach described in Section \ref{s3} above. Finally,
	in Subsection \ref{subs:rat-equiv} we show how to transfer the results obtained in this way 
	back to the originally given system.
	
	\subsection{Rational approximations of a fractional spectrum}
	\label{subs:rat-approx}
	
	\begin{definition}\label{dnxx}
		For a given matrix $A \in M_n(\mathbb R)$, a multi-index $\hat\alpha  
		= (\alpha_1, \ldots, \alpha_n) \in (0,1]^n$
		and  $\epsilon > 0$,
		we call $\hat\beta = (\beta_1, \ldots, \beta_n) \in {((0,1] \cap \mathbb Q)^n}$ an 
		\emph{$\epsilon$-rational approximation} of $\hat\alpha$ associated with $A$ if the 
		following conditions are satisfied:
		\begin{itemize}
			\item [(i)] $0 < \beta_i \leq \alpha_i \leq 1$ for all $i \in N$.
			\item [(ii)] There exists a constant $ R = R(A, \hat\alpha, \epsilon) \geq 1$ such that 
				\[
					\sigma_{\hat\alpha, \epsilon}^2(A) \cap \left\{z \in \mathbb C : |z| > R \right\} = 
					\sigma_{\hat\beta, \epsilon}^2(A) \cap \left\{z \in \mathbb C : |z| > R \right\} = \emptyset .
				\]
			\item [(iii)] There is a constant $\rho = \rho(A, \hat\alpha, \epsilon) \in (0, 1)$ such that
				\[
					\sigma_{\hat\alpha}(A) \cap \left\{z \in \mathbb C : |z| < \rho \right\} 
					= \sigma_{\hat\beta}(A) \cap \left\{z \in \mathbb C : |z| < \rho \right\} = \emptyset .
				\]
			\item [(iv)] For $R$ and $\rho$ chosen as above, we have
				$$ 
					\sup_{\rho \leq |z| \leq R}\, |z^{\alpha_i}- z^{\beta_i}| < \epsilon \text{ for all }i \in N. 
				$$
		\end{itemize}
	\end{definition}
	
	Our first observation in this context establishes that this definition is actually meaningful.
	
	\begin{proposition} \label{ttxx} 
		Let $A \in M_n(\mathbb R)$ such that $\det A \neq 0$ and let 
		$ \hat\alpha  = (\alpha_1, \ldots, \alpha_n) \in (0,1]^n$. 
		Then, for any  $\epsilon > 0$, there exists some $\hat\beta 
		= (\beta_1, \ldots, \beta_n) \in {((0,1] \cap \mathbb Q)^n}$ 
		which is an $\epsilon$-rational approximation of $\hat\alpha$ associated with $A$.
	\end{proposition}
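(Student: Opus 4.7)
The plan is to build $\hat\beta$ componentwise by approximating each $\alpha_i$ from below by a sufficiently close rational number, with the uniform tolerance chosen at the end to simultaneously enforce all four requirements. Since $\mathbb Q$ is dense in $\mathbb R$, such rationals always exist in $(0,1]$, and taking $\beta_i\le\alpha_i$ automatically yields (i). My strategy is to first pin down a pair of radii $R\ge 1$ and $\rho\in(0,1)$ that work simultaneously for $\hat\alpha$ and for every $\hat\beta$ with components close enough to $\hat\alpha$, and only then shrink the tolerance further to enforce (iv).

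For (ii) I would use the vector characterization \eqref{eq:pseudo-iii}: any $z\in \sigma_{\hat\gamma,\epsilon}^2(A)$ admits a unit vector $v$ with $\|(z^{\hat\gamma}I-A)v\|_2\le\epsilon$, and combining this with the elementary bound $\|z^{\hat\gamma}Iv\|_2\ge \min_i|z|^{\gamma_i}$ yields $\min_i|z|^{\gamma_i}\le \|A\|_2+\epsilon$. Writing $m_0:=\tfrac12\min_i\alpha_i$ and choosing $R$ so that $R^{m_0}>\|A\|_2+\epsilon$ rules this out for all $|z|>R$, uniformly for any $\hat\gamma$ whose components satisfy $\gamma_i\ge m_0$. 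Condition (iii) is handled dually via the Weyl singular-value perturbation bound $\sigma_{\min}(z^{\hat\gamma}I-A)\ge \|A^{-1}\|_2^{-1}-\max_i|z|^{\gamma_i}$: the hypothesis $\det A\neq 0$ forces $\|A^{-1}\|_2^{-1}>0$, so picking $\rho\in(0,1)$ small enough that $\rho^{m_0}<\|A^{-1}\|_2^{-1}$ keeps $z^{\hat\gamma}I-A$ invertible for $|z|<\rho$, again uniformly in all such $\hat\gamma$. Requiring from now on that $\beta_i\in[m_0,\alpha_i]$ makes both bounds apply to $\hat\beta$ as well as to $\hat\alpha$.

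For (iv) I would write $z^{\alpha_i}-z^{\beta_i}=z^{\beta_i}\bigl(e^{(\alpha_i-\beta_i)\log z}-1\bigr)$ using the principal branch of $\log$. On the compact annulus $\{\rho\le|z|\le R\}$, both $|z^{\beta_i}|$ and $|\log z|$ are bounded by constants depending only on $R$ and $\rho$, so the one-line estimate $|e^w-1|\le|w|e^{|w|}$ shows that $\sup_{\rho\le|z|\le R}|z^{\alpha_i}-z^{\beta_i}|$ tends to $0$ as $\alpha_i-\beta_i\to 0$. Picking the final tolerance small enough to both preserve $\beta_i\ge m_0$ and make this supremum strictly less than $\epsilon$ for every $i$ completes the construction.

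The main subtle point I expect is the branch cut of $z\mapsto z^\alpha$ on $(-\infty,0)$: the principal branch is discontinuous there as a function of $z$, so $z^{\alpha_i}-z^{\beta_i}$ is not continuous on the whole annulus. This is not a genuine obstruction because (iv) compares the two powers at the \emph{same} $z$, and once the difference is put in the exponential form above the jump cancels between $z^{\alpha_i}$ and $z^{\beta_i}$; still, the bound on $|\log z|$ must be written down carefully, using $\arg z\in(-\pi,\pi]$ to keep control on the whole annulus.
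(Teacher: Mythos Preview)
Your proposal is correct and follows a genuinely different, more streamlined route than the paper. For condition~(ii), the paper invokes the Euclidean pseudo-Ger\v sgorin localization of Theorem~\ref{dlgp}, while you extract the same outer bound directly from the vector characterization~\eqref{eq:pseudo-iii}; this avoids the detour through Ger\v sgorin sets at the cost of slightly less explicit constants. For condition~(iii), the paper expands $\det(z^{\hat\gamma}I-A)$ as in~\eqref{eq_added} and bounds the non-constant terms against $|\det A|$, whereas your use of Weyl's singular-value inequality gives the invertibility in one line via $\sigma_{\min}(z^{\hat\gamma}I-A)\ge\|A^{-1}\|_2^{-1}-\max_i|z|^{\gamma_i}$; the paper's expansion has the side benefit of yielding explicit constants reused later in Algorithm~1, but for the bare existence statement your argument is cleaner. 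For condition~(iv), the paper works in polar coordinates and splits $|z^{\alpha_i}-z^{\beta_i}|^2$ into a radial part and an angular part (see~\eqref{bt}--\eqref{bt5}), producing separate thresholds $\delta_1,\delta_2,\delta_3$; your exponential factorization $z^{\alpha_i}-z^{\beta_i}=z^{\beta_i}\bigl(e^{(\alpha_i-\beta_i)\log z}-1\bigr)$ compresses this into a single uniform estimate on the annulus and handles the branch-cut issue in one stroke. Both proofs share the essential structural idea of first fixing $R$ and $\rho$ uniformly over a neighbourhood $\{\hat\gamma:\gamma_i\in[m_0,\alpha_i]\}$ of $\hat\alpha$ and only afterwards shrinking the tolerance to secure~(iv).
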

	
	\begin{proof}
		Put $l(\hat\alpha) = \alpha_1 + \ldots + \alpha_n$, $\nu(\hat\alpha) = \min_{i \in N} \{ \alpha_i \}$ 
		and $\epsilon_0 = \frac{1}{2}\nu(\hat\alpha)$.
		
		We define $ \mathcal F(\hat\alpha) := \{\hat\gamma = (\hat\gamma_1,\, \ldots,
		\hat\gamma_n) \in (0,1]^n : 0 <  \alpha_i - \epsilon_0 \leq \hat\gamma_i 
		\leq \alpha_i  \text{ for all } i \in N \} $  and
		\begin{align}\label{R1}
			R:=\max \left\{ \left(\max_{i\in N} \{|a_{ii}|+ r_i(A)+r_i(A^{\mathrm T})\}
				+\epsilon\right)^{1/{(\nu(\hat\alpha)-\epsilon_0)}} ,
			 	\max_{i\in N} \left ( \frac{\epsilon}{\sqrt{2}} \right )^{1/\alpha_i}, 1\right\}
		\end{align}
		where, as in Theorem \ref{1}, we set $r_i(A) =  \sum_{j\in N, j \neq i} |a_{ij}|$.
		Then, for any $\hat\gamma \in  \mathcal F(\hat\alpha) $ and $z \in \mathbb C$, $|z| > R$,  
		we have for all $i \in N$
		\begin{align*}
			|z^{\hat\gamma_i}-a_{ii}| &\geq |z|^{\hat\gamma_i}-|a_{ii}|
					> R^{\hat\gamma_i} - |a_{ii}|\geq R^{\alpha_i -\epsilon_0}
						-|a_{ii}| \geq R^{\nu(\hat\alpha)-\epsilon_0} - |a_{ii}|\\
			& \geq |a_{ii}| +r_i(A) + r_i(A^{\mathrm T}) +\epsilon - |a_{ii}| \geq \max\{r_i(A), r_i(A^{\mathrm T})\} +\epsilon.
		\end{align*}
		Thus, by Theorem \ref{dlgp},
		\begin{align}\label{dk1}
			\sigma_{\hat\gamma, \epsilon}^2(A) \cap \left\{z \in \mathbb C : |z| > R \right\} 
			= \emptyset \quad \text{ for all } \hat\gamma \in \mathcal F(\hat\alpha).
		\end{align}
		Now take $\hat {\mathcal B_n} := \{0, 1\}^n$ 
		and $ \mathcal B_n := \left\{\xi \in \hat {\mathcal B_n} : \xi \neq (0, \ldots, 0)\;
		\text {and } \xi \neq (1, \ldots, 1) \right\}$. 
		For any $\hat{\gamma} \in \mathcal F(\hat\alpha)$, we have
		\begin{equation}\label{eq_added}
			\det (z^{\hat\gamma}I - A) 
			= z^{l(\hat\gamma)} 
				+ \sum_{\xi \in \mathcal B_n} c_\xi z^{\left< \hat\gamma, \xi\right>} +(-1)^n \det A,
		\end{equation}
		where $\left\langle \cdot\, ,\cdot \right\rangle$ is the usual scalar product on $\mathbb R^n$.
		If $\xi \in \mathcal B_n$, $\xi_{i_1} = \ldots = \xi_{i_r} = 1$ for some 
		$\{i_1, \ldots, i_r \}\subset N$ and
		$\xi_i = 0$ for all $i \in N\setminus \{i_1, \ldots, i_r\}$,
		then $z^{\left< \hat\gamma, \xi\right>} = z^{\hat\gamma_{i_1}}\cdots z^{\hat\gamma_{i_r}}$. 
		By using the same arguments as in calculating the coefficient of the term
		$s^{p_{i_1}}\cdots s^{p_{i_r}}$ in the 
		expansion of $\det (s^{\hat p} I - A)$ above, we obtain $c_\xi = (-1)^r \det A_{(i_1, \ldots, i_r)}$,
		where $A_{(i_1, \ldots, i_r)}$ is obtained from $A$ by removing the rows 
		$i_1, \ldots, i_r$ and the columns $i_1, \ldots, i_r$.
		
		Let $c = \max \{1, \max_{\xi \in \mathcal B_n} |c_\xi| \} $ and 
		$\rho_1 = \min \left\{ \left ( \frac{|\det A|}{(2^n-1)c} \right )^{1/{(\nu(\hat\alpha)-\epsilon_0)}},
		\frac{1}{2}\right\}$.
		Because $\det A \neq 0$, we may conclude that $0 <\rho_1 < 1$. 
		Moreover, for all $\hat\gamma \in \mathcal F(\hat\alpha) $ and $|z| < \rho_1 < 1$, 
		\begin{align*}
			\max \left \{ |z|^{\hat\gamma_1 + \ldots + \hat\gamma_n}, 
			\max_{\xi \in \mathcal B_n}|z|^{\left< \hat\gamma, \xi\right>} \right \} 
			& \leq |z|^{\min_{i\in N}\{\hat\gamma_i\}} \leq |z|^{\nu(\hat\alpha)-\epsilon_0}
			 < \rho_1^{\nu(\hat\alpha)-\epsilon_0} \leq \frac{|\det A|}{(2^n-1)c}.
		\end{align*}
		Hence, by \eqref{eq_added}, the following estimates hold 
		\begin{align*}
			|\det (z^{\hat\gamma} I - A)|
		              &  = \left |z^{{{\hat\gamma}_1 +\ldots {\hat\gamma}_n}} 
		              	+ \sum_{\xi \in \mathcal B_n} c_\xi z^{\left< \hat\gamma, \xi\right>} 
		              		+(-1)^n\det A \right | \\
		              & \geq |(-1)^n\det A| - |z^{{{\hat\gamma}_1 +\ldots +{\hat\gamma}_n}}| 
		              	- \left | \sum_{\xi \in \mathcal B_n} c_\xi z^{\left< \hat\gamma, \xi\right>} \right | \\
			 &	\geq |\det A| - |z|^{\hat\gamma_1 + \ldots + \hat\gamma_n} 
				- \sum_{\xi \in \mathcal B_n}|c_\xi|\cdot|z|^{\left<\hat\gamma, \xi \right>} \\
			& > |\det A| - (2^n-1)c\frac{|\det A|}{(2^n-1)c} = 0
		\end{align*}
		for any $\hat\gamma \in \mathcal F(\alpha) $ and $|z| < \rho_1 < 1$. From this, we see
		\begin{align}\label{dk2a}
			\sigma_{\hat\gamma}(A) \cap \{z \in \mathbb C: |z| < \rho_1\} = \emptyset
			\text{ for all } \hat\gamma \in \mathcal F(\hat\alpha).
		\end{align}
		Put $\rho_2 = \min \left\{\left ( \frac{\epsilon}{2} \right )^{1/(\nu(\hat\alpha)-\epsilon_0)},\frac{1}{2} \right\}$. 
		Then $0 < \rho_2 < 1$. Furthermore, for all 
		$\hat\gamma \in \mathcal F(\hat\alpha) $ and $|z| < \rho_2 < 1$, we have
		\begin{align}\label{dk2b}
			|z^{\alpha_i} - z^{\hat\gamma_i}|& \leq |z|^{\alpha_i}
			+|z|^{\hat\gamma_i} \leq  \rho_2^{\alpha_i}+\rho_2^{\hat\gamma_i} 
			\leq 2\rho_2^{\nu(\hat\alpha)-\epsilon_0}<\epsilon \text{ for all } i \in N. 
		\end{align}
		Take $\rho = \min \{\rho_1, \rho_2\}$, then $0 < \rho < 1$. 
		Moreover, from \eqref{dk2a} and \eqref{dk2b}, we conclude
		\begin{align}\label{dk2.1}
			\sup_{|z| < \rho} |z^{\alpha_i} - z^{\hat\gamma_i}| 
			& < \epsilon \text{ for all } i \in N \text{ and } \\
			\label{dk2.2}
			\sigma_{\hat\gamma}(A) \cap \{z \in \mathbb C: |z| < \rho\} 
			& = \emptyset  
			\text{ for all } \hat\gamma \in \mathcal F(\hat\alpha).
		\end{align}
		For all $z \in \mathbb C$, $\rho \leq |z| \leq R$, we use the polar coordinate form 
		$z = r(\cos\varphi + \iu \sin \varphi)$ with $\rho \leq r \leq R$ and $-\pi < \varphi \leq \pi$. Then,
		for any ${\tilde{\alpha}}={(\tilde{\alpha}_1,\dots, \tilde{\alpha}_n)} \in (0, 1]^n$, we see
		\begin{align}\label{bt}
			\left|z^{\alpha_i} -z^{{\tilde{\alpha}}_i} \right|^2 
			&= \left|\left ( r^{\alpha_i}\cos(\alpha_i\varphi)- r^{{\tilde{\alpha}_i}}\cos (\tilde\alpha_i\varphi) \right )+
				\iu \left ( r^{\alpha_i}\sin(\alpha_i\varphi)- r^{\tilde\alpha_i}\sin (\tilde\alpha_i\varphi)  \right ) \right|^2 \nonumber \\
			&= \left ( r^{\alpha_i}\cos(\alpha_i\varphi)- r^{\tilde\alpha_i}\cos (\tilde\alpha_i\varphi) \right )^2 + 
				\left ( r^{\alpha_i}\sin(\alpha_i\varphi)- r^{\tilde\alpha_i}\sin (\tilde\alpha_i\varphi)  \right )^2\nonumber \\
			&= r^{2\alpha_i} + r^{2\tilde\alpha_i} - 2r^{\alpha_i + \tilde\alpha_i}
				\left ( \cos(\alpha_i\varphi)\cos(\tilde\alpha_i\varphi) +\sin(\alpha_i\varphi)\sin(\tilde\alpha_i\varphi) \right ) \nonumber \\
			&= r^{2\alpha_i} + r^{2\tilde\alpha_i} - 2r^{\alpha_i + \tilde\alpha_i}\cos\left ( (\alpha_i -\tilde\alpha_i)\varphi \right )\nonumber \\
			&= \left ( r^{\alpha_i}-r^{\tilde\alpha_i} \right )^2 
				+ 2r^{\alpha_i + \tilde\alpha_i}\left ( 1 -\cos\left ( (\alpha_i -\tilde\alpha_i)\varphi \right )  \right ).
		\end{align}
		For each $i \in N$, we set $\delta_{1,i} = \log_R\left ( 1+ \epsilon / (\sqrt{2}R^{\alpha_i}) \right ) > 0$. 
		Then, for all $\hat{\hat \alpha} \in (0,1]^n$ such that $0 \leq \alpha_i - \hat{\hat {\alpha}}_i < \delta_{1,i}$
		for all $i \in N$ and any $\rho \leq r \leq R$, we have
		\begin{align}\label{bt1}
			r^{\alpha_i - \hat{\hat {\alpha}}_i} - 1& \leq R^{\alpha_i -\hat{\hat {\alpha}}_i}-1 
			< R^{\log_R\left ( 1+\frac{\epsilon}{\sqrt{2}R^{\alpha_i}} \right )} -1
			= \frac{\epsilon}{\sqrt{2}R^{\alpha_i}}
		\end{align}
		for all $i \in N$. 
		Because of \eqref{R1}, we know that $\epsilon / (\sqrt{2}R^{\alpha_i}) < 1$ for all $i \in N$. For each $i \in N$, let
		$\delta_{2,i} =  \log_\rho\left ( 1-\frac{\epsilon}{\sqrt{2}R^{\alpha_i}} \right ) > 0$. Then, for any
		$\tilde{\tilde{\alpha}} \in (0,1]^n$ satisfying $0 \leq \alpha_i - \tilde{\tilde{\alpha}}_i< \delta_{2,i}$ for
		all $i \in N$ and all $\rho \leq r \leq R$, we have
		\begin{align}\label{bt2}
			r^{\alpha_i - \tilde{\tilde{\alpha}}_i} - 1 \geq \rho^{\alpha_i - \tilde{\tilde{\alpha}}_i} - 1 
			> \rho^{\log_\rho \left ( 1- \epsilon / ( \sqrt{2}R^{\alpha_i}) \right )} - 1 
			= - \frac{\epsilon}{\sqrt{2}R^{\alpha_i}}.
		\end{align}
		Let ${\delta_{1,2,\min}} = \min \left\{ \delta_{1,i}, \delta_{2,i} : i \in N \right\} > 0$. 
		By combining \eqref{bt1} and \eqref{bt2}, 
		for any $\hat{\kappa} \in (0,1]^n$ such that $0 \leq \alpha_i - \hat{\kappa}_i< {\delta_{1,2,\min}}$ 
		for all $i \in N$ and $\rho \leq r \leq R$, 
		we find
		\begin{align}\label{bt3}
			- \frac{\epsilon}{\sqrt{2}R^{\alpha_i}} <  r^{\alpha_i - \hat{\kappa}_i} - 1 <  \frac{\epsilon}{\sqrt{2}R^{\alpha_i}} 
		\end{align}
		for all $i \in N$.
		Thus,  for any $\hat{\kappa} \in (0,1]^n$ with $0 \leq \alpha_i - \hat{\kappa}_i< {\delta_{1,2,\min}}$ for all $i \in N$ 
		and $\rho \leq r \leq R$, we obtain
		\begin{align}\label{bt4.0}
			\left ( r^{\alpha_i}-r^{\hat\kappa_i} \right )^2 =r^{2\hat{\kappa}_i} \left (  r^{\alpha_i - \hat{\kappa}_i} - 1 \right ) ^2
			\leq R^{2{\alpha_i}}\frac{\epsilon^2}{2R^{2\alpha_i}} = \frac{\epsilon^2}{2}
		\end{align}
		for all $i \in N$. 
		By \eqref{R1}, we have $\epsilon / (2R^{\alpha_i}) < 1$ for all $i \in N$. 
		Thus, $0 <  1 - \epsilon^2 / (4R^{2\alpha_i}) < 1$ for all $i \in N$, 
		and for each $i \in N$, there exists some $\varphi_i \in (0, \pi/2)$ such that
		$\cos \varphi_i = 1 - \epsilon^2 / (4R^{2\alpha_i})$. 
		Define $\delta_{3,i} = \varphi_i / \pi > 0$ for $i \in N$. For $\alpha^* \in (0,1]^n$ such that
		$0 \leq \alpha_i - \alpha^*_i < \delta_{3,i}$ for all $i \in N$ and $-\pi < \varphi \leq \pi$, we have
		\begin{align*}
			-\varphi_i < -\pi(\alpha_i-\alpha^*_i) \leq \varphi(\alpha_i-\alpha^*_i)
			\leq \pi(\alpha_i-\alpha^*_i)< \varphi_i
		\end{align*} 
		for all $i \in N$. Thus,
		\begin{align*}
			0 \leq 1- \cos\left ( (\alpha_i -\alpha^*_i)\varphi \right ) 
			< 1 - \cos\varphi_i = \frac{\epsilon^2}{4R^{2\alpha_i}}
		\end{align*}
		for all $i \in N$. {{This implies that for all}} 
		$\rho \leq r \leq R$ and $-\pi < \varphi \leq \pi$, we find
		\begin{align}\label{bt5}
			2r^{\alpha_i + \alpha^*_i}\left ( 1- \cos\left ( (\alpha_i -\alpha^*_i)\varphi \right ) \right ) 
				< 2R^{2\alpha_i} \frac{\epsilon^2}{4R^{2\alpha_i}} = \frac{\epsilon^2}{2},\; \forall i \in N.
		\end{align}
		Choosing ${\delta_{3,\min}} = \min_{i \in N} \left\{ \delta_{3,i} \right\}$ and 
		$\delta = \min \left\{ { \delta_{1,2,\min}, \delta_{3,\min}, } \epsilon_0 \right\}$, 
		we see that $\delta > 0$. On the other hand, using \eqref{bt}, 
		\eqref{bt4.0} and \eqref{bt5}, for any $\hat\gamma \in \mathcal F(\hat\alpha)$ such that 
		$0 \leq \alpha_i - \hat\gamma_i < \delta$ for all $i \in N$ and all $z \in \mathbb C$ with
		$\rho \leq |z| \leq R $, we have
		\begin{align}\label{dk3}
			\left|z^{\alpha_i} -z^{\hat\gamma_i} \right| < \epsilon
		\end{align}
		for all $i \in N$.
		%Thus, for $\hat\gamma \in (0,1]^n$
		%with $0 \leq \alpha_i - \hat\gamma_i < \delta$ for all $i \in N$, we get
		%\begin{align}\label{dk3}
		%	\sup_{\rho \leq |z| \leq R} \,\,\left|z^{\alpha_i} -z^{\hat\gamma_i} \right| < \epsilon
		%\end{align}
		%for all $i \in N$. 
		
		Due to the density of $\mathbb Q$ in $\mathbb R$, there exists $\hat\beta \in {((0,1] \cap \mathbb Q)^n}$ such that 
		$0 \leq \alpha_i - \beta_i < \delta$ for all $i \in N$.
		We will prove that $\hat\beta$ is a rational approximation of $\hat\alpha$. Indeed, since $0 < \beta_i \leq \alpha_i \leq 1$, 
		the condition (i) in Definition \ref{dnxx} is satisfied. Since $\delta \leq \epsilon_0$, we have that
		$0 < \alpha_i -\epsilon_0 \leq \beta_i \leq \alpha_i$ for all $i\in N$. This implies
		$\hat\beta \in \mathcal F(\hat\alpha)$. Obviously $\hat\alpha \in \mathcal F(\hat \alpha)$. So, according to \eqref{dk1}, 
		\begin{align}
			\sigma_{\hat\alpha, \epsilon}^2(A) \cap \left\{z \in \mathbb C : |z| > R \right\} 
			= \emptyset 
			\text{ and }
			\sigma_{\hat\beta, \epsilon}^2(A) \cap \left\{z \in \mathbb C : |z| > R \right\} 
			= \emptyset.
		\end{align}
		Therefore, the condition (ii) in Definition \ref{dnxx} is satisfied. 
		Next, since $\hat\beta, \hat\alpha \in \mathcal F(\hat\alpha)$, by \eqref{dk2.1}, we have
		\begin{align}
			\sup_{|z| < \rho} |z^{\alpha_i} - z^{\beta_i}| < \epsilon
		\end{align}
		for all $i \in N$, and \eqref{dk2.2} implies
		\begin{align}
			\sigma_{\hat\alpha}(A) \cap \{z \in \mathbb C: |z| < \rho\} 
			= \emptyset
			\text{ and }
			\sigma_{\hat\beta}(A) \cap \{z \in \mathbb C: |z| < \rho\} = \emptyset.  
		\end{align}
		{{From this}} the condition (iii) in Definition \ref{dnxx} is satisfied.
		Finally, since $0 \leq \alpha_i - \beta_i < \delta$ for all $i \in N$, by \eqref{dk3}, we have
		\begin{align}
			\sup_{\rho \leq |z| \leq R} \,\,\left|z^{\alpha_i} -z^{\beta_i} \right| < \epsilon
		\end{align}
		for all $i \in N$.
		Hence, the condition (iv) in Definition \ref{dnxx} is satisfied.
	\end{proof}

	The above proposition actually shows us a way to find rational approximations of $\hat\alpha$ associated with a matrix $A$. 
	Indeed, based on these considerations, we can propose the following algorithm to find an $\epsilon$-rational 
	approximation of $\hat\alpha$ associated with a matrix $A$.
	
	{\bf Algorithm 1} 
	
	\hrule\vskip-0.5em
	{\bf Input}: Matrix $A$, multi-index $\hat\alpha = (\alpha_1, \ldots, \alpha_n)$ and a constant $\epsilon>0$.
	
	{\bf Step 1}: Put $a = \frac{1}{2} \min_{i = 1, \ldots ,n} \{ \alpha_i\}$ and $b = \max_{i = 1, \ldots ,n} \{ \alpha_i\}$.
	
	{\bf Step 2}: Calculate all the principal minors and the determinant of $A$. 
		Then compare the calculated numbers with each other and with $1$ to find the largest number which is then assigned to $c$. 
		
	{\bf Step 3}: Calculate the following parameters:  
	\begin{align*}
		R &=\max \left\{ \left(\max_{i\in N} \{|a_{ii}|+ r_i(A)+r_i(A^{\mathrm T})\}+\epsilon\right)^{1/a} , 
							\max_{i\in N} \left ( \frac{\epsilon}{\sqrt{2}} \right )^{1/\alpha_i}, 1\right\}, \\
		\rho &= \min \left\{ \left ( \frac{|\det A|}{(2^n-1)c} \right )^{1/a}, \left ( \frac{\epsilon}{2} \right )^{1/a}, \frac{1}{2}\right\}.
	\end{align*}
	{\bf Step 4}: Calculate the following quantities:
	\begin{align*}
		&\delta_{1} = \log_R\left ( 1+\frac{\epsilon}{\sqrt{2}R^{b}} \right ),\\ 
		& \delta_{2} =  \log_\rho\left ( 1-\frac{\epsilon}{\sqrt{2}R^{b}} \right ), \\
		&\delta_{3} = \cos^{-1} \left (  1 - \frac{\epsilon^2}{4R^{2b}} \right ).
	\end{align*}
	and take $\delta = \min \{ \delta_1, \delta_2, \delta_3, a\}$.
	
	{\bf Step 5:} For each $i = 1 , \ldots, n$, find a rational number $\beta_i$ such that $\alpha_i - \delta < \beta_i \leq \alpha_i$.
	
	{\bf Output:} Multi-index $\hat\beta = (\beta_1, \ldots, \beta_n)$.
	
	\hrule

	\subsection{Equivalence between the fractional spectrum and its rational approximation}
	\label{subs:rat-equiv}
	
	Consider a matrix $A \in M_n(\mathbb R)$ and a multi-index $\hat\alpha \in (0,1]^n$. 
	Inspired by the definition of the spectral radius of a matrix and the applications of this concept
	in the theory of ordinary differential equations, see e.g., \cite{HINRICHSEN,VanLoan}, 
	we propose the definition
	\begin{align*}
		\delta_{\hat\alpha}^2(A) 
		&:=\inf \left\{\|E\|_2 : E \in M_n(\mathbb C), \sigma_{\hat\alpha}(A+E) \cap \mathbb C_{\geq 0} \neq \emptyset \right\}. 
	\end{align*}
	Suppose further that $\sigma_{\hat\alpha}(A) \subset \mathbb C_{-}$. Then, similar to \cite[Proposition 3.1]{HINRICHSEN}, we have
	\begin{align}\label{bkht}
		\delta_{\hat\alpha}^2(A) 
		&=\inf \left\{\|E\|_2 : E \in M_n(\mathbb C) \text{ and } 
				\sigma_{\hat\alpha}(A+E) \cap \iu \mathbb R \neq \emptyset \right\}\nonumber \\
		&= \inf \left\{\epsilon : \sigma_{\hat\alpha,\epsilon}^2(A) \cap \iu \mathbb R \neq \emptyset \right\} \nonumber\\
		&= \inf \left\{\epsilon : \text{there exists some }z \in \iu \mathbb R
						\text{ such that } \|(z^{\hat\alpha} I - A)^{-1}\|_2^{-1} = \epsilon \right\} \nonumber\\
		&= \min_{\Re(z) = 0}\,\|(z^{\hat\alpha} I - A)^{-1}\|_2^{-1}.
	\end{align}
	
	\begin{remark}\label{rm1}
		From the definition of $\delta_{\hat\alpha}^2(A)$, we see that 
		$\sigma_{\hat\alpha}(A + E) \subset \mathbb C_{-}$
		if $\| E \|_2 < \delta_{\hat\alpha}^2(A)$ for all $E \in M_n(\mathbb C)$. 
	\end{remark}
	
	\begin{remark}\label{rm2}
		If $A \in M_n(\mathbb R)$ and $\sigma_{\hat\alpha}(A) \subset \mathbb C_{-}$, then $\det (z^{\hat\alpha} I - A) \neq 0$ 
		whenever $\Re(z) = 0$. 
		Thus $\|(z^{\hat\alpha} I - A)^{-1}\|_2^{-1} >0$ for all $z \in \mathbb C$ with $\Re(z) = 0$ 
		and $\min_{\Re(z) = 0} \|(z^{\hat\alpha} I - A)^{-1}\|_2^{-1} > 0$, 
		which together with \eqref{bkht} implies that $\delta_{\hat\alpha}^2{(A)} > 0$. 
	\end{remark}
	
	\begin{remark}\label{rm3}
		Assume that $A \in M_n(\mathbb R)$ and $\sigma_{\hat\alpha}(A) \subset \mathbb C_{-}$. 
		Let $\epsilon > 0$ such that $\sigma_{\hat\alpha, \epsilon}^2(A)  \subset \mathbb C_{-}$.
		Then, due to \eqref{eq:pseudo-ii}, we obtain that $ \sigma_{\hat\alpha}(A + E) \subset \sigma_{\hat\alpha, \epsilon}^2(A)  
		\subset \mathbb C_{-}$ for every matrix $E \in M_n(\mathbb C)$ provided that 
		$\| E \|_2 \leq \epsilon$. This implies $\delta_{\hat\alpha}^2(A) \geq \epsilon$. 
		Thus, we have $\delta_{\hat\alpha}^2(A) \geq \sup \left\{\epsilon: \sigma_{\hat\alpha, \epsilon}^2(A)  
		\subset \mathbb C_{-}  \right\}$.
	\end{remark}
	
	\begin{theorem}\label{dl3}
		For a given matrix $A \in M_n(\mathbb R)$ and a multi-index $\hat\alpha \in (0,1]^n$, the following statements are equivalent:
		\begin{itemize}
			\item [(i)] $\sigma_{\hat\alpha}(A) \subset \mathbb C_{-}$;
			\item [(ii)] There is a constant $h_0 >0$ such that for all  $\epsilon \in (0, h_0)$ and all $\epsilon$-rational approximations
				$\hat\beta \in (0,1]^n \cap \mathbb Q^n$ 
				of $\hat\alpha$ associated with $A$, we have
				$\sigma_{\hat\beta}(A) \subset \mathbb C_{-}$ and $\delta_{\hat\beta}^2(A) \geq \epsilon$;
			\item [(iii)] There exists an $\epsilon$-rational approximation $\hat\beta \in (0,1]^n \cap \mathbb Q^n$
				of $\hat\alpha$ associated with $A$
				such that $\sigma_{\hat\beta}(A) \subset \mathbb C_{-}$ and
				$\delta_{\hat\beta}^2(A) \geq \epsilon$. 
		\end{itemize}
	\end{theorem}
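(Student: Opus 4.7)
The plan is to establish the cyclic chain (iii) $\Rightarrow$ (i) $\Rightarrow$ (ii) $\Rightarrow$ (iii). The central mechanism throughout is the following elementary observation: if $\hat\beta$ is an $\epsilon$-rational approximation of $\hat\alpha$ and $z_0$ lies in the annulus $\rho \leq |z_0| \leq R$, then the diagonal matrix $E_0 := \diag(z_0^{\alpha_1} - z_0^{\beta_1}, \ldots, z_0^{\alpha_n} - z_0^{\beta_n})$ satisfies $\|E_0\|_2 < \epsilon$ by condition (iv) of Definition \ref{dnxx}, while the algebraic identity $z_0^{\hat\beta} I - A = z_0^{\hat\alpha} I - (A + E_0)$ converts membership in one fractional spectrum into membership in a diagonally perturbed version of the other: $z_0 \in \sigma_{\hat\alpha}(A) \Leftrightarrow z_0 \in \sigma_{\hat\beta}(A - E_0)$ and $z_0 \in \sigma_{\hat\beta}(A) \Leftrightarrow z_0 \in \sigma_{\hat\alpha}(A + E_0)$.

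For (iii) $\Rightarrow$ (i), I would argue by contradiction. Assume $z_0 \in \sigma_{\hat\alpha}(A) \cap \mathbb{C}_{\geq 0}$. Condition (iii) of Definition \ref{dnxx} gives $|z_0| \geq \rho$, while $\sigma_{\hat\alpha}(A) \subset \sigma_{\hat\alpha,\epsilon}^2(A)$ together with condition (ii) gives $|z_0| \leq R$. Hence $z_0 \in \sigma_{\hat\beta}(A - E_0)$ and $\|E_0\|_2 < \epsilon \leq \delta_{\hat\beta}^2(A)$, so Remark \ref{rm1} (applied to the perturbation $-E_0$) forces $z_0 \in \mathbb{C}_{-}$, contradicting $\Re(z_0) \geq 0$.

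For (i) $\Rightarrow$ (ii), Remark \ref{rm2} gives $\delta_{\hat\alpha}^2(A) > 0$, and I set $h_0 := \delta_{\hat\alpha}^2(A)/2$. For any $\epsilon \in (0, h_0)$ and any $\epsilon$-rational approximation $\hat\beta$, the inclusion $\sigma_{\hat\beta}(A) \subset \mathbb{C}_{-}$ follows from the mirror of the previous argument: any hypothetical $z_0 \in \sigma_{\hat\beta}(A) \cap \mathbb{C}_{\geq 0}$ gives $z_0 \in \sigma_{\hat\alpha}(A + E_0)$ with $\|E_0\|_2 < \epsilon < \delta_{\hat\alpha}^2(A)$, contradicting Remark \ref{rm1}. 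For the bound $\delta_{\hat\beta}^2(A) \geq \epsilon$, I would use \eqref{bkht}: if some $z_1 \in \iu \mathbb{R}$ and unit vector $v$ satisfied $\|(z_1^{\hat\beta} I - A) v\|_2 < \epsilon$, then condition (ii) gives $|z_1| \leq R$, and in the regime $\rho \leq |z_1| \leq R$ the triangle inequality together with $\|E_0\|_2 < \epsilon$ yields $\|(z_1^{\hat\alpha} I - A) v\|_2 < 2\epsilon$, whence $\delta_{\hat\alpha}^2(A) \leq \|(z_1^{\hat\alpha} I - A)^{-1}\|_2^{-1} < 2\epsilon < \delta_{\hat\alpha}^2(A)$, a contradiction. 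The implication (ii) $\Rightarrow$ (iii) is then immediate from Proposition \ref{ttxx}, since (i), and hence (ii), forces $\det A \neq 0$ ($\det A = 0$ would put $0 \in \sigma_{\hat\alpha}(A)$, contradicting (i)).

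The main obstacle I expect is the technical gap near the origin in the proof of $\delta_{\hat\beta}^2(A) \geq \epsilon$: condition (iv) of Definition \ref{dnxx} controls $|z^{\alpha_i} - z^{\beta_i}|$ only on the annulus $\rho \leq |z| \leq R$, so the case $|z_1| < \rho$ is not handled by the annular argument above. The natural remedy is to exploit that, for small $|z|$, $\|z^{\hat\beta} I\|_2 = \max_i |z|^{\beta_i}$ is small, so a Neumann-series estimate around $-A$ gives $\|(z^{\hat\beta} I - A)^{-1}\|_2^{-1} \geq \|A^{-1}\|_2^{-1} - \|z^{\hat\beta} I\|_2$; combined with $\delta_{\hat\alpha}^2(A) \leq \|A^{-1}\|_2^{-1}$ (from \eqref{bkht} applied at $z = 0$) and the smallness of $\rho$ built into the construction in Proposition \ref{ttxx}, this lower bound exceeds $\epsilon$ on $\{|z| < \rho\}$ as well, closing the gap.
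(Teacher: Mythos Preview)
Your approach is essentially that of the paper: the same cyclic chain, the same perturbation device $E = z^{\hat\alpha}I - z^{\hat\beta}I$, the same choice $h_0 = \tfrac12\delta_{\hat\alpha}^2(A)$, and the same appeal to Proposition~\ref{ttxx} for the existence step. The paper runs the cycle as (i)$\Rightarrow$(ii)$\Rightarrow$(iii)$\Rightarrow$(i) and, for $\delta_{\hat\beta}^2(A)\ge\epsilon$, shows directly that $\sigma_{\hat\beta,\epsilon}^2(A)\subset\mathbb C_-$ and then invokes Remark~\ref{rm3}; your variant via \eqref{bkht} and a unit vector $v$ amounts to the same thing.

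You are in fact more careful than the paper on one point: you correctly flag that condition~(iv) of Definition~\ref{dnxx} only controls $|z^{\alpha_i}-z^{\beta_i}|$ on the annulus $\rho\le|z|\le R$, so the case $|z_1|<\rho$ needs separate treatment. The paper simply writes ``by Definition~\ref{dnxx}~(iv)'' at the analogous step without verifying $|z_1|\ge\rho$. Your proposed Neumann-series remedy, however, does not quite close the gap as stated: you invoke ``the smallness of $\rho$ built into the construction in Proposition~\ref{ttxx}'', but statement~(ii) quantifies over \emph{all} $\epsilon$-rational approximations, and Definition~\ref{dnxx} allows $\rho$ to be any value in $(0,1)$ for which (ii)--(iv) hold, so $\rho^{\min_i\beta_i}$ need not be small relative to $\|A^{-1}\|_2^{-1}$. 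A cleaner patch is to observe that for $|z|<\rho<1$ one has $|z^{\alpha_i}-z^{\beta_i}|\le|z|^{\alpha_i}+|z|^{\beta_i}\to 0$ as $|z|\to 0$, so one may always shrink $\rho$ (conditions (ii) and (iii) are preserved, and (iv) on the enlarged annulus follows from continuity together with its validity on the original annulus and near $0$); alternatively, just note that the specific $\rho$ from Proposition~\ref{ttxx} does satisfy the needed bound via~\eqref{dk2.1}, and that without loss of generality one may work with that choice.

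A smaller issue: in your (ii)$\Rightarrow$(iii) step you write that ``(i), and hence (ii), forces $\det A\ne 0$''. But when $\det A=0$ no $\epsilon$-rational approximation exists (since then $0\in\sigma_{\hat\alpha}(A)$ violates condition~(iii) of Definition~\ref{dnxx} for every $\rho>0$), so (ii) would hold vacuously; it does not by itself force $\det A\ne 0$. The paper's proof of (ii)$\Rightarrow$(iii) has the same tacit reliance on $\det A\ne 0$.
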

	
	\begin{proof} 
		We will first prove that (i) $\Rightarrow$ (ii). Suppose $\sigma_{\hat\alpha}(A) \subset C_{-}$. Then, by Remark \ref{rm2}, we have
		$\delta_{\hat\alpha}^2(A) > 0$. 
		We thus choose $h_0 = \delta_\alpha^2(A) / 2 > 0$. 
		According to Proposition \ref{ttxx}, for all  $0 < \epsilon < h_0$ there exists some 
		$\hat\beta \in (0,1]^n \cap \mathbb Q^n$ 
		which is an $\epsilon$-rational approximation of $\hat\alpha $ associated with $A$.
		Therefore, $\hat\beta$ satisfies the conditions (i)--(iv) of Definition \ref{dnxx} whenever $\epsilon < h_0$. 
		From \eqref{eq:pseudospectra}, we have $ \sigma_{\hat\beta}(A) \subset \sigma_{\hat\beta, \epsilon}^2(A)$. 
		Hence, by Definition \ref{dnxx} (ii), there exists a constant $R$
		such that $ \sigma_{\hat\beta}(A) \cap \{ z \in \mathbb C : |z| > R\} = \emptyset$. 
		Moreover, by Definition \ref{dnxx} (iii), there exists a constant $\rho$ such that 
		$\sigma_{\hat\beta}(A) \cap \{ z \in \mathbb C : |z| <\rho \}= \emptyset$. 
		Consider any $z_0 \in \sigma_{\hat\beta}(A)$. Then $\rho \leq |z| \leq R$ and
		\begin{align}\label{bt6}
			0 = \det(z_0^\beta I - A) = \det (z_0^{\hat\alpha} I - A -( z_0^{\hat\alpha} I - z_0^{\hat\beta} I) ) 
			= \det (z_0^{\hat\alpha} I - (A + E ))
		\end{align}
		with $E = z_0^{\hat\alpha} I - z_0^{\hat\beta} I \in M_n(\mathbb C)$. Thus, $z_0 \in \sigma_{\hat\alpha}( A+ E)$. Furthermore,
		according to Definition \ref{dnxx} (iv), we have $ |z_0^{\alpha_i}- z_0^{\beta_i}| < \epsilon$ for all $i \in N$. Hence,
		\begin{align}\label{bt7}
			\|E\|_2 = \| z_0^{\hat\alpha} I - z_0^{\hat\beta} I  \|_2 = \max_{i \in N}|z_0^{\alpha_i} - z_0^ {\beta_i}| < \epsilon.
		\end{align}
		Since $\epsilon \leq h_0 < \delta_{\hat\alpha}^2 (A), $ by Remark \ref{rm1}
		we see that $\sigma_{\hat\alpha}( A+ E) \subset \mathbb C_{-}$
		which implies $z_0 \in \mathbb C_{-}$.
		Therefore, $\sigma_{\hat\beta}(A) \subset \mathbb C_{-}$. 
		Next, we consider an arbitrary $z_1 \in \sigma_{\hat\beta, \epsilon}^2(A)$. 
		According to \eqref{eq:pseudo-ii}, there exists $E_1 \in M_n(\mathbb C)$ with $\| E_1\|_2 \leq \epsilon$ 
		such that $z_1 \in \sigma_{\hat\beta} (A + E_1)$. 
		This implies that
		\begin{align}\label{bt8}
			0 &= \det (z_1^{\hat\beta} I - (A +E_1)) = \det (z_1^{\hat\alpha} I - (A+E_1)-(z_1^{\hat\alpha} I - z_1^{\hat\beta} I))\nonumber \\
			& = \det (z_1^{\hat\alpha} I - (A+E_1 + E_2))
		\end{align}
		where $E_2 = z_1^{\hat\alpha} I -  z_1^{\hat\beta} I \in M_n(\mathbb C)$. Thus $z_1 \in \sigma_{\hat\alpha}( A+ E_1 + E_2)$. 
		
		On the other hand, since $z_1 \in \sigma_{\hat\beta, \epsilon}^2(A)$,  Definition \ref{dnxx} (ii) implies $|z_1| \leq R$, and
		by Definition \ref{dnxx} (iv), we have $|z_1^{\alpha_i} - z_1^{\beta_i}| < \epsilon$ for all $i \in N$. Hence,
		\begin{align}\label{bt9}
			\|E_2\|_2 =  \|z_1^{\hat\alpha} I - z_1^{\hat\beta} I\|_2 = \max_{i \in N}|z_1^{\alpha_i} - z_1^{\beta_i}| < \epsilon.
		\end{align}
		So, $\| E_1 + E_2 \|_2 \leq \| E_1\|_2 + \| E_2\|_2 < \epsilon + \epsilon \leq 2h_0 = \delta_{\hat\alpha}^2{A}$. 
		Consequently, by Remark \ref{rm1}, we have
		$\sigma_{\hat\alpha} (A + E_1 + E_2) \subset \mathbb C_{-}$,
		and it follows that $z_1 \in \mathbb C_{-}$. Hence, $\sigma_{\hat \beta, \epsilon}^2(A) \subset \mathbb C_{-}$.
		By Remark \ref{rm3}, we have $\delta_{\hat\beta}^2(A) \geq \epsilon$. Thus, we have proved (i) $\Rightarrow$ (ii).
		
		(ii) $\Rightarrow$ (iii) is obvious because of Proposition \ref{ttxx}.
		
		Finally, we will prove (iii) $\Rightarrow$ (i). Suppose that $\hat\beta$ is an $\epsilon$-rational approximation 
		of $\hat\alpha$ associated with $A$ such that $\sigma_{\hat\beta}(A) \subset \mathbb C_{-}$ and 
		$\delta_{\hat\beta}^2(A) \geq \epsilon$. 
		Let $z_2 \in \sigma_{\hat\alpha}(A)$ be arbitrary. Then,
		\begin{align}\label{bt10}
			0 = \det (z_2^{\hat\alpha} I - A) 
			= \det (z_2^{\hat\beta} I - A -(z_2^{\hat\beta} I - z_2^{\hat\alpha} I)) = \det (z_2^{\hat\beta} I - (A+E_3))
		\end{align}
		where $E_3 = z_2^{\hat\beta} I -  z_2^{\hat\alpha} I \in M_n(\mathbb C)$. 
		Thus, $z_2 \in \sigma_{\hat\beta}( A+ E_3)$. On the other hand, by \eqref{eq:pseudospectra}, 
		we have $ \sigma_{\hat\alpha}(A) \subset \sigma_{\hat\alpha, \epsilon}^2(A)$.
		Since $\beta$ is an $\epsilon$-rational approximation of $\hat\alpha$ associated with $A$, according to Definition \ref{dnxx} (ii) and (iii), 
		there exist constants $\rho$ and $R$ with $0 < \rho < 1 < R$ such that $\rho \leq |z_2| \leq R$. 
		Then, by Definition \ref{dnxx} (iv), we have $|z_2^{\alpha_i} - z_2^{\beta_i}| < \epsilon$ for all $i \in N$ which implies that
		\begin{align}\label{bt11}
			\|E_3\|_2 = \|(z_2^{\hat\beta} - z_2^{\hat\alpha}) I \|_2 = \max_{i \in N}|z_2^{\alpha_i} - z_2^{\beta_i}| < \epsilon.
		\end{align}
		Since $\epsilon < \delta_{\hat\beta}^2(A)$, by Remark \ref{rm1} we see that $\sigma_{\hat\beta}( A + E_3) \subset \mathbb C_{- }$. 
		Hence, $z_2 \in \mathbb C_{-}$ and therefore, since $z_2$ was an arbitrary element of $\sigma_{\hat\alpha} (A)$,
		we can conclude that
		 $\sigma_{\hat\alpha} (A) \subset \mathbb C_{-}$. Thus, we have completed the proof that (iii) $\Rightarrow$ (i) 
		 and hence also the proof of the complete theorem.
	\end{proof}
	As discussed above, we have given a criterion for testing whether the fractional spectrum of a matrix is lying in the open left half 
	of the complex plane. This criterion is based on rational approximations of the fractional spectrum. 
	An important step in this process is to estimate the positive lower bounds of 
	$\delta_{\hat\alpha}^2(A)$ to find a suitable approximation. Now we will discuss in detail a case where the lower bound estimate for 
	$\delta_{\hat\alpha}^2(A)$ is explicitly specified and thereby establish an algorithm that checks whether
	$\sigma_{\hat\alpha}(A)$ is in $\mathbb C_{-} $.
	
	\begin{proposition}\label{md1}
		Let $A \in M_n(\mathbb R)$ and $\hat\alpha \in (0,1]^n$. In addition, suppose that $\sigma_{\hat\alpha}(A) \subset \mathbb C_{-}$ 
		and $\lambda_{\min}(-(A + A^{\mathrm T})) > 0$, where $\lambda_{\min}(-(A + A^{\mathrm T}))$ 
		is the smallest eigenvalue of the matrix $-(A + A^{\mathrm T})$. 
		Then, $\delta_{\hat\alpha}^2(A )\geq \frac{1}{2}\lambda_{\min}(-(A + A^{\mathrm T})) > 0$.
	\end{proposition}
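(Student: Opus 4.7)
The proof will exploit the variational characterisation of $\delta_{\hat\alpha}^2(A)$ together with a Rayleigh-quotient / numerical-range argument on the imaginary axis.

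The plan is to start from the identity
\[
	\delta_{\hat\alpha}^2(A) = \min_{\Re(z)=0} \|(z^{\hat\alpha}I - A)^{-1}\|_2^{-1}
\]
established in \eqref{bkht}, combined with the inner characterisation \eqref{eq:pseudo-iii} of the pseudospectrum which yields
\[
	\|(z^{\hat\alpha}I - A)^{-1}\|_2^{-1} = \min_{\|v\|_2 = 1} \|(z^{\hat\alpha}I - A)v\|_2 .
\]
Hence it is enough to show that for every $z \in \iu\R$ and every unit vector $v \in \mathbb C^n$,
\[
	\|(z^{\hat\alpha}I - A)v\|_2 \ \geq\ \tfrac{1}{2}\lambda_{\min}(-(A+A^{\mathrm T})).
\]

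For this, I would apply the Cauchy--Schwarz inequality in the form
\[
	\|(z^{\hat\alpha}I - A)v\|_2 \ \geq\ \bigl|\langle (z^{\hat\alpha}I - A)v, v\rangle\bigr| \ \geq\ \Re \langle (z^{\hat\alpha}I - A)v, v\rangle ,
\]
and then split the right-hand side into the diagonal and matrix parts:
\[
	\Re \langle (z^{\hat\alpha}I - A)v, v\rangle = \sum_{i \in N} \Re(z^{\alpha_i})\,|v_i|^2 - \Re \langle Av, v\rangle .
\]
For the first sum, writing $z = \iu t$ with $t \in \R$ and invoking the principal-branch convention from Section~\ref{s2} gives $\Re(z^{\alpha_i}) = |t|^{\alpha_i}\cos(\alpha_i \pi/2) \geq 0$, because $\alpha_i \in (0,1]$ implies $\alpha_i \pi/2 \in (0,\pi/2]$. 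For the second, since $A$ is real, a direct computation yields
\[
	\Re \langle Av, v\rangle = \tfrac{1}{2} \langle (A + A^{\mathrm T}) v, v\rangle ,
\]
and because $A + A^{\mathrm T}$ is a real symmetric matrix, the Rayleigh-quotient estimate gives
\[
	\langle (A + A^{\mathrm T}) v, v\rangle \ \leq\ \lambda_{\max}(A + A^{\mathrm T})\|v\|_2^2 = -\lambda_{\min}(-(A + A^{\mathrm T})) .
\]
Combining these two inequalities with the Cauchy--Schwarz bound produces the desired lower bound on $\|(z^{\hat\alpha}I - A)v\|_2$, and taking the minimum over unit $v$ and over $z \in \iu \R$ delivers the claim. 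The strict positivity $\tfrac{1}{2}\lambda_{\min}(-(A+A^{\mathrm T})) > 0$ is immediate from the hypothesis.

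No step is genuinely hard; the one requiring a little care is verifying that $\Re(z^{\alpha_i}) \geq 0$ for $z$ on the imaginary axis, where one must be careful to use the principal branch as explicitly agreed in Section~\ref{s2} (otherwise the wrong sign of the sine/cosine could spoil the estimate). Everything else is a one-line application of Cauchy--Schwarz and of the standard Rayleigh-quotient bound for the symmetric matrix $A+A^{\mathrm T}$.
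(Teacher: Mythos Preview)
Your proposal is correct and follows essentially the same approach as the paper: both arguments combine the formula \eqref{bkht}, the Rayleigh-quotient bound for the symmetric matrix $-(A+A^{\mathrm T})$, the nonnegativity of $\Re(z^{\alpha_i})$ on the imaginary axis, and Cauchy--Schwarz. The only cosmetic difference is that the paper fixes a maximiser $\omega_0$, $u_0$ and bounds $\|x\|_2 = \|((\iu\omega_0)^{\hat\alpha}I - A)^{-1}u_0\|_2$ from above, whereas you work with the dual characterisation $\|(z^{\hat\alpha}I-A)^{-1}\|_2^{-1} = \min_{\|v\|_2=1}\|(z^{\hat\alpha}I-A)v\|_2$ and bound this from below uniformly in $z$ and $v$; the computations are otherwise identical.
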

	
	\begin{proof}
		In view of $\sigma_{\hat\alpha}(A) \subset \mathbb C_{-}$, by \eqref{bkht} and Remark \ref{rm2} we have
		$$
			\delta_{\hat\alpha}^2(A )= \min_{\Re(z) = 0} \|\left ( z^{\hat\alpha} I - A \right )^{-1 }\|_2^{-1} > 0 .
		$$
		This implies that
		$$
			(\delta_{\hat\alpha}^2(A))^{-1} = \max_{\Re(z) = 0} \|\left ( z^{\hat\alpha} I - A \right )^{-1}\|_2 > 0 .
		$$ 
		From this relation we deduce that there exists some $\omega_0 \in \mathbb R$ with %the property that 
		$\max_{\Re(z) = 0} \|\left ( z^{\hat\alpha} I - A \right )^{-1}\|_2 =$\linebreak
		$\|\left ( (\iu \omega_0)^{\hat\alpha} I - A \right )^{-1}\|_2$.
		Therefore, there exists $u_0 \in \mathbb C^n$ with $\|u_0\|_2 = 1$ such that 
		$\|\left( (\iu\omega_0)^{\hat\alpha} I - A \right ) ^{-1}\|_2 = \|\left ( (\iu \omega_0)^{\hat\alpha} I - A \right )^{-1} u_0\|$. 
		Using the notation $x = \left ( (\iu\omega_0)^{\hat\alpha} I - A \right )^{-1} u_0$, we
		see that $\|x\| = (\delta_{\hat\alpha}^2(A))^{-1} > 0$, so $x \neq 0$. 
		Applying the min-max theorem to the Hermitian matrix $-(A + A^{\mathrm T}) = -(A + A^*)$
		(note that $A$ is a real matrix by assumption), we have
		\begin{align*}
			\lambda_{\min}(-(A + A^*))\|x\|_2^2 & \leq \left< (-(A + A^*)x,x\right>\\
			&\leq \left< \left ( (\iu \omega_0)^{\hat\alpha} I  - A + ((\iu \omega_0)^{\hat\alpha} I - A)^* 
				- 2 \Re\left ( (\iu \omega_0)^{\hat\alpha} I \right )  \right )x, x\right>
		\end{align*}
		where $\Re\left ( (\iu\omega_0)^{\hat\alpha} I \right ) = 
		\diag(|\omega_0^{\alpha_1}| \cos\frac{\alpha_1\pi}{2}, \ldots,
		|\omega_0^{\alpha_n}| \cos\frac{\alpha_n\pi}{2})$.
		Thus, 
		\begin{align}
		\label{bt12}
			\lambda_{\min}(-(A + A^*))\|x\|_2^2 +2\left<  \Re\left ( (\iu \omega_0)^\alpha I \right )   x  ,x\right> 
				&\leq \left< \left ( (i\omega_0)^{\hat\alpha} I  - A + ((\iu \omega_0)^{\hat\alpha} I - A)^*   \right )x, x\right> \nonumber \\
				&= 2\Re \left ( \left< \left ( (\iu \omega_0)^{\hat\alpha} I - A \right )x,x\right> \right ) \nonumber \\
				&= 2\Re\left< u_0,x\right> 
					\leq 2|\Re\left< u_0,x\right>| 
					\leq 2 |\left< u_0,x\right>| \nonumber \\
				& \leq 2\|u_0\|_2 \|x\|_2 = 2\|x\|_2.
		\end{align}
		On the other hand, 
		\begin{align}
			\label{bt13}
			2\left<  \Re\left ( (\iu \omega_0)^{\hat\alpha} I \right )  x ,x\right> 
			&= \left< \left ( |\omega_0|^{\alpha_1}\cos\frac{\alpha_1\pi}{2}x_1, \ldots, |\omega_0|^{\alpha_n}
					\cos\frac{\alpha_n\pi}{2}x_n\right )^{\mathrm T}, x \right> \nonumber \\
			&= \sum_{i=1}^{n}|\omega_0|^{\alpha_i}\cos \frac{\alpha_i\pi}{2}|x_i|^2 \geq 0.
		\end{align}
		Using \eqref{bt12} and \eqref{bt13}, we see 
		\begin{align*}
			\lambda_{\min}(-(A + A^*))||x||_2^2 \leq 2\|x\|_2,
		\end{align*}
		which implies that $\|x\|_2^{-1} \geq \frac{1}{2}\lambda_{\min}(-(A + A^*))$. 
		Recalling once again that $A \in M_n(\mathbb R)$, we conclude 
		$\delta_{\hat\alpha}^2(A)  \geq\frac{1}{2}\lambda_{\min}(-(A + A^{\mathrm T}))$. 
		The proof is complete. 
	\end{proof}
	
	The arguments of our proofs allow us to formulate an algorithm to check,
	for matrices $A$ satisfying the condition $\lambda_{\min}(-(A + A^{\mathrm T})) > 0$, whether $\sigma_{ \hat\alpha}(A)$
	lies in the open left half of the complex plane:
	
	{\bf Algorithm 2}
	
	\hrule\vskip-0.5em
	{\bf Input}: Matrix $A$ satisfying $\lambda_{\min}(-(A + A^{\mathrm T})) > 0$, and a multi-index $\hat\alpha$.
	
	{\bf Step 1}: Calculate $\lambda_{\min}(-(A + A^{\mathrm T}))$ and put $h_0 =\frac{1}{2}\lambda_{\min}(-(A + A^{\mathrm T}))$.

	{\bf Step 2}: Apply Algorithm 1 using the matrix $A$, the multi-index $\hat\alpha$ and $\epsilon = h_0$ 
		as input data to find $\hat\beta$ which is an $\epsilon$-rational approximation of $\hat\alpha$ associated with $A$.

	{\bf Step 3}: Check if $\sigma_{\hat\beta}(A)$ lies in the open left half of the complex plane. 
		If $\sigma_{\hat\beta}(A) \subset \mathbb C_{-}$, 
		we conclude that $\sigma_{\hat \alpha}(A) \subset \mathbb C_{-}$. 
		If $\sigma_{\hat \beta}(A) \nsubseteq \mathbb C_{-}$, we conclude that $\sigma_{\hat \alpha}(A) \nsubseteq \mathbb C_{-}$.
	
	{\bf Output}: The result of Step 3, i.e.\ the information whether or not $\sigma_{\hat \alpha}(A)$ 
		lies in the open left half of the complex plane.
	
	\hrule\bigskip
	
	\begin{example}\label{example2}
		To illustrate the proposed algorithms, we consider the system 
		\begin{equation}\label{Eq27}
			^C D^{\hat\alpha}_{0^+} x(t) = Ax(t),\; t > 0
		\end{equation}
		with 
		$$ 
			 A = \begin{pmatrix}
				-0.5&-0.2  &-0.15  &0.25  \\
				0.15&-0.4  &0.2  &-0.15  \\
				0.25& 0.15  &-0.6  &0.3  \\
				0.2& -0.1 &-0.1  &-0.3  \\
			\end{pmatrix}
		$$ 
		(as in Example \ref{vd1}) and the multi-index $\hat\alpha = ( \alpha_1, \alpha_2, \alpha_3, \alpha_4) = 
		\left ( \frac{128}{71 \sqrt{13}}, \frac{64}{71 \sqrt{13}}, 
		\frac{90}{47 \sqrt{33}}, \frac{45}{47 \sqrt{33}} \right )$.
		By direct calculations we have $\lambda_{\min}(-(A + A^{\mathrm T})) \approx 0.204$. 
		We may therefore set $h_0 = 0.1$ 
		and find the $0.1$-rational approximation $\hat\beta$ of $\hat\alpha$ associated 
		with $A$ using Algorithm 1 as follows:
		We have $a = \frac{1}{2}\min_{i \in N}\{\alpha_i\} = \frac{45}{94\sqrt{33}}$, 
		$b = \max_{i \in N}\{\alpha_i\} = \frac{128}{71\sqrt{13}}$ and $c =1$.
		By simple calculations, we get 
		\begin{align*}
			R &= (1.75 + 0.1)^{1/a} \approx 1606.922,  \\ 
			\rho &= \left ( \frac{3759}{80000 \times 15} \right )^{1/a} \approx 8.94 \times10^{-31}, \\
			\delta_1 & = \log_R \left ( 1 + \frac{\epsilon}{\sqrt{2}R^b} \right ) \approx 0.000239, \\
			\delta_2 & = \log_\rho \left ( 1 - \frac{\epsilon}{\sqrt{2}R^b} \right ) \approx 0.0000255, \\
			\delta_3 & = \frac 1 \pi \cos^{-1} \left ( 1 - \frac{\epsilon^2}{4R^{2b}}\right ) \approx 0.000561. 
		\end{align*}
		This implies that $\delta = \delta_2 = 0.0000255$. 
		Furthermore, $0.4995 \approx \alpha_1 - \delta < \beta_1 < \alpha_1 
		\approx 0.50001$. Hence, we can take $\beta_1 = 1/2$.  
		Similarly, we have $\beta_2 = 1/4$, $\beta_3 = 1/3$ and $\beta_4 = 1/6$
		which shows that $\hat\beta = \left (\frac{1}{2}, \frac{1}{4}, \frac{1}{3}, \frac{1}{6} \right )$ is a 
		$0.1$-rational approximation of $\hat\alpha$ associated with $A$. 
		From Example \ref{vd1}, we see that $\sigma_{\hat\beta}(A) \subset \mathbb C_{-}$ 
		and thus $\sigma_{\hat\alpha} (A) \subset \mathbb C_{-}$. 
		A plot of the corresponding solution function graphs is visually undistinguishable from the 
		plot shown in Figure \ref{refhinh1}, therefore we do not show this explicitly here. But clearly,
		this indicates the asymptotic stability in the case discussed here too. This effect could have been 
		expected because the difference between the system considered here and the system of 
		Example \ref{vd1} above is only a tiny change in the orders of the differential operators,
		and standard theoretical results \cite[Theorem 6.22]{Kai} show that---unless the generalized 
		eigenvalues of the original system
		had been so close to the boundary of the stability region that this change had made them 
		move to the other side of the boundary, which is not the case here---such small changes only lead to
		correspondingly small changes in the solutions.
%		Figure \ref{refhinh2} illustrates this property.
		
%		\begin{figure}[htp]
%			\begin{center}
%				\includegraphics[scale=.7]{Example4-8_a_50000_logaxis} % {Thai-2}
%			\end{center}
%			\caption{Trajectories of the solution to \eqref{Eq27} with the initial 
%				condition $x_0= (0.25, 0.3, 0.8, 0.4)^{\mathrm T}$.}
%			\label{refhinh2}
%		\end{figure}
	\end{example}
	%\section{Hệ phương trình vi phân bậc phân số hỗn hợp tuyến tính không thuần nhất}

\section{Asymptotic behavior of solutions to incommen\-surate frac\-tional-order nonlinear systems}
\label{s5}

	Based on the developments above, we can now state some results about the stability of 
	fractional multi-order differential systems. We will begin with a discussion of the case of a linear system and
	deal with the nonlinear case afterwards.
	
	\subsection{Inhomogeneous linear systems}
	\label{st5}
	
	Consider the inhomogeneous linear mixed-order system
	\begin{align}\label{bt3.1}
		^C D^{\hat{\alpha}}_{0^+}x(t) = Ax(t) +f(t),\;t>0
	\end{align}
	with the initial condition
	\begin{align}\label{dkd3}
		x(0)= x^0 \in \mathbb R^n,
	\end{align}
	where $\hat\alpha = (\alpha_1, \alpha_2, \ldots \alpha_n) \in (0, 1]^n$, 
	$A\in \mathbb R^{n\times n}$ and $f: [0, \infty) \rightarrow \mathbb R^n$
	is continuous and exponentially bounded, that is, there exist constants $M, \gamma > 0$ 
	such that $\|f(t)\| \leq Me^{\gamma t}$ for all $t \in [0, \infty)$.
	We will first establish a variation of constants formula for the problem \eqref{bt3.1}-\eqref{dkd3}. 
	To this end, we may generalize the approach described in \cite[Subsection 2.2]{TuanThai_2022} for the
	case $n=2$, i.e.\ we 
	take the Laplace transform on both sides of the system \eqref{bt3.1} and incorporate the
	initial condition \eqref{dkd3} to get the algebraic equation
	\begin{align}
		(s^{\hat\alpha} I ) X(s) - (s^{\hat\alpha -1}I) x^0 = A X(s)+F(s),  
	\end{align}
	where $s^{\hat\alpha}I = \diag(s^{\alpha_1}, \ldots, s^{\alpha_n})$,
	$s^{\hat\alpha -1}I = \diag(s^{\alpha_1-1}, \ldots, s^{\alpha_n-1})$ and $X(\cdot)$ and $F(\cdot)$ are the 
	Laplace transforms of $x(\cdot)$ and $f(\cdot)$, respectively. Thus,
	\begin{align}\label{ptds}
		X(s) = (s^{\hat\alpha} I - A)^{-1} \left( (s^{\alpha_1 - 1}x^0_1,\ldots,s^{\alpha_n-1}x^0_n)^{\mathrm T}
					+ F(s) \right).  
	\end{align}
	Since $(s^{\hat\alpha} I - A)^{-1} =\frac{1}{Q(s)} \left ( (-1)^{i+j}\Delta^A_{ij}(s) \right )_{n \times n}$, 
	where $Q(s) = \det(s^{\hat\alpha} I- A)$ and $\Delta^A_{ij}(s)$ is the determinant of the matrix 
	obtained from the matrix 
	$s^{\hat\alpha} I - A$ by removing the $j$-th row and the $i$-th column, for each $i \in N$ we have
	\begin{align}\label{ctbdn}
		X_i(s) = \sum_{j=1}^{n}\frac{1}{Q(s)}\left ( (-1)^{i+j}\Delta^A_{ij}(s)\right)s^{\alpha_j-1}x^0_j
				+  \sum_{j=1}^{n}\frac{1}{Q(s)}\left ( (-1)^{i+j}\Delta^A_{ij}(s)\right)F_j(s).
	\end{align}
	Next, we will explicitly calculate the terms $\Delta^A_{ij}(s)$. For $i = j$, we put 
	$$
		\widetilde{\hat{\alpha}^i} = (\alpha_1, \ldots, \alpha_{i-1}, \alpha_{i+1},\ldots, \alpha_n)
	$$ 
	and designate by $A_{(i;i)}$ the matrix obtained from the matrix $A$ by removing 
	the $i$-th row and $i$-th column. Then,
	$$ 
		\Delta^A_{ii}(s) = \det (s^{\widetilde{\hat{\alpha}^i}} I - A_{(i;i)}). 
	$$
	Proceeding much as in Section \ref{s3}, we obtain
	\begin{align}\label{dt1}
		\Delta^A_{ii}(s) 
		= \sum_{\eta \in \mathcal B^{n-1}} c^{(i;i)}_\eta s^{\left< \widetilde{\hat{\alpha}^i},\eta \right>}
	\end{align}
	where $\mathcal B^{n-1} = \{0, 1 \}^{n-1}$
	and, for every $\eta \in \mathcal B^{n-1}$, the $c^{(i;i)}_\eta$ are constants that depend 
	only on the matrix $A_{(i;i)}$. Put
	$$ 
		\hat{\mathcal B}^{n}_i = \{\xi \in \mathcal B^n : \xi_i = 0 \}
	$$
	where $\mathcal B^n = \{0, 1\}^n$ as in the proof of Proposition \ref{ttxx}.
	We see that 
	$$
		\left\{\left< \widetilde{\hat{\alpha}^i},\eta \right>: \eta \in \mathcal B^{n-1}\right\} 
		= \left\{\left<\hat\alpha, \xi \right>: \xi \in \hat{\mathcal B}^{n}_i \right\}, 
	$$ 
	and thus 
	$$ 
		\Delta^A_{ii} = \sum_{\xi \in \hat{\mathcal B}^n_i}c^{(i;i)}_\xi s^{\left< \hat\alpha, \xi\right>}. 
	$$
	Let 
	$$
		\widetilde{\mathcal B}^n_i =\left\{ \nu \in \mathcal B^n : \nu_i = 1  \right\}, 
	$$
	then 
	\begin{align}\label{ct1}
		\Delta^A_{ii}s^{\alpha_i-1} 
		&= \sum_{\xi \in \hat{\mathcal B}^n_i}c^{(i;i)}_
			\xi s^{\left< \hat\alpha, \xi\right>}s^{\alpha_i -1} 
		= \sum_{\xi \in \hat{\mathcal B}^n_i}c^{(i;i)}_
			\xi s^{\left< \hat\alpha, \xi\right>+\alpha_i -1} 
		 =\sum_{\nu \in \widetilde{\mathcal B}^n_i}c^{(i;i)}_
			\nu s^{\left< \hat\alpha, \nu\right> -1}.
	\end{align}
	The last equality above is obtained because
	$$
		\left\{\left<\hat\alpha, \xi \right> + \alpha_i : \xi \in \hat{\mathcal B}^n_i \right\}
		= \left\{\left< \hat\alpha, \nu\right> : \nu \in  \widetilde{\mathcal B}^n_i\right\} . 
	$$
	Next, for $i, j \in N$ with $i \neq j$, we put
	\begin{align}
		\widetilde{\alpha^i_j} = 
			\begin{cases}
				(\alpha_1,\ldots,\alpha_{i-1}, \alpha_{i+1},\ldots, \alpha_{j-1}, 0, \alpha_{j+1},\ldots,\alpha_n)
					 & \text{ if } i < j, \\
				(\alpha_1,\ldots,\alpha_{j-1}, 0, \alpha_{j+1},\ldots, \alpha_{i-1}, \alpha_{i+1},\ldots,\alpha_n)
					& \text{ if } i > j, 
			\end{cases}
	\end{align} 
	and $\hat A = A + {\bf 1}_{ij}$ where ${\bf 1}_{ij}$ is the $n \times n$ matrix whose element 
	at the $i$-th row and the $j$-th column is 1 while all other entries are 0. Then,
	$$
		\Delta^A_{ij} = \det(s^{\widetilde{\alpha^i_j}}I - \hat A_{(j;i)}), 
	$$
	where $\hat A_{(j,i)}$ is the matrix obtained from the matrix $\hat A$ 
	by omitting the $j$-th row and the $i$-th column. Thus, 
	\begin{align}
		\Delta^A_{ij}(s) = \sum_{\eta \in \mathcal B^{n-1}} c^{(i;j)}_\eta s^{\left< \widetilde{\hat{\alpha}^i_j},\eta \right>}, 
	\end{align}
	where,  for every $\eta \in \mathcal B^{n-1}$, the $c^{(i;j)}_\eta$ are constants that depend only on the 
	matrix $\hat A_{(j;i)}$. Put 
	$$ 
		\hat{\mathcal B}^n_{(i;j)} = \left\{\xi \in \mathcal B^n : \xi_i = \xi_j = 0 \right\} . 
	$$
	Then,
	$$ 
		\left\{ \left< \widetilde{\hat{\alpha}^i_j},\eta\right> : \eta \in \mathcal B^{n-1} \right\} 
			= \left\{\left< \hat\alpha, \xi\right> : \xi \in \hat{\mathcal B}^n_{(i;j)} \right\}. 
	$$
	Thus, 
	\begin{align}
		\Delta^A_{ij}(s) 
		= \sum_{\xi \in \hat{\mathcal B}^n_{(i;j)}} c^{(i;j)}_\xi s^{\left< \hat\alpha, \xi \right>}. 
	\end{align}
	Similarly, writing
	$$
		\widetilde{\mathcal B^n_{i;j}} = \left\{ \zeta \in \mathcal B^n : \zeta_i = 0, \zeta_j = 1\right\}, 
	$$
	we obtain 
	\begin{align}\label{ct2}
		\Delta^A_{ij}s^{\alpha_j-1} 
		&= \sum_{\xi \in \hat{\mathcal B}^n_{(i;j)}} 
			c^{(i;j)}_\xi s^{\left< \hat\alpha, \xi \right>}s^{\alpha_j -1} 
		= \sum_{\xi \in \hat{\mathcal B}^n_{(i;j)}}c^{(i;j)}_
			\xi s^{\left< \hat\alpha, \xi\right>+\alpha_j -1}
		=\sum_{\zeta \in \widetilde{\mathcal B^n_{i;j}}}c^{(i;j)}_
			\zeta s^{\left< \hat\alpha, \zeta\right> -1}.
	\end{align}
	The last equality in \eqref{ct2} is obtained by
	$$
		\left\{\left< \hat\alpha, \xi\right>+\alpha_j: \xi \in \hat{\mathcal B}^n_{(i;j)} \right\} 
		= \left\{\left< \hat\alpha, \zeta\right>:\zeta \in \widetilde{\mathcal B^n_{i;j}} \right\}. 
	$$
	Taking
	$$ 
		\mathcal M_i = \left\{\lambda : \lambda = \left< \hat\alpha, \nu\right>,\nu \in \widetilde{\mathcal B}^n_i
					\,\, \text{ or } \,\, 
					\lambda = \left< \hat\alpha, \zeta\right>, \zeta \in \widetilde{\mathcal B^n_{i;j}}  \right\}, 
	$$
	we have, for all $i \in N$,
	\begin{align}\label{ct3}
		\sum_{j=1}^{n} \frac{1}{Q(s)} \left ( (-1)^{i+j} \Delta^A_{ij}(s) \right) s^{\alpha_j-1} x^0_j 
		= \sum_{\lambda \in \mathcal M_i} c^i_\lambda \frac{s^\lambda}{s Q(s)}
	\end{align}
	with certain uniquely determined constants $c^i_\lambda \in \mathbb R$.
	In much the same way, setting
	$$
		\mathcal N_i = \left\{ \beta : \beta =\left<\hat\alpha, \xi \right>, \xi \in \hat{\mathcal B}^n_i \,\, \text{ or }\,\, 
		\beta = \left< \hat\alpha, \eta \right>, \eta \in \hat{\mathcal B}^n_{(i;j)} \right\}, 
	$$
	we have
	\begin{align}\label{ct5}
		\sum_{j=1}^{n} \frac{1}{Q(s)} \left ( (-1)^{i+j} \Delta^A_{ij}(s) \right) F_j(s)
		= \sum_{\beta \in \mathcal N_i} c_\beta^i \frac{s^\beta}{Q(s)} F(s)
	\end{align}
 	for every $i \in N$, where once again the real constants $c_\beta^i$ are uniquely determined.
	From \eqref{ctbdn}, \eqref{ct3} and \eqref{ct5}, we conclude
	\begin{align}
		X_i(s) = \sum_{\lambda \in \mathcal M_i}c^i_\lambda\frac{s^\lambda}{sQ(s)} 
		+ \sum_{\beta \in \mathcal N_i}c_\beta^i\frac{s^\beta}{Q(s)}F(s).
	\end{align}
	Thus, defining
	\begin{align}\label{R}
		R_i^\lambda(t) = \mathcal L^{-1}\left\{\frac{s^\lambda}{sQ(s)} \right\}
		\text{ for } \lambda \in \mathcal M_i,
	\end{align}
	and
	\begin{align}\label{S}
		S_i^\beta(t) = \mathcal L^{-1}\left\{\frac{s^\beta}{Q(s)} \right\}
		\text{ for } \beta \in \mathcal N_i,
	\end{align}
	we get the variation of constants formula for the problem \eqref{bt3.1}-\eqref{dkd3} as follows: 
	\begin{align}\label{ctbths1}
		x_i(t)
			= \sum_{\lambda \in \mathcal M_i}c^i_\lambda R_i^\lambda(t) + 
				\sum_{\beta \in \mathcal N_i}c_\beta^i (S_i^\beta*f_i) (t), \quad i=1, 2,\ldots, n.
	\end{align}
	
	To determine the asymptotic behaviour of the functions $x_i$ for $i \in N$---and hence the
	stability properties of the differential equation \eqref{bt3.1}---from eq.~\eqref{ctbths1},
	we need to obtain information about the asymptotic behaviour of the functions $R^\lambda_i$ and $S^\beta_i$.
	For this purpose, we can argue in exactly the same way as in \cite[Lemma 8]{TuanThai_2022}. 
	This leads us to the following result.
	
	\begin{lemma}
		\label{lemma:asymptotics-r-s}
		Let $\hat\alpha \in (0,1]^n$. Put 
		$\nu = \min \{\alpha_1, \alpha_2, \ldots, \alpha_n\}$. Assume that $\sigma_{\hat\alpha}(A)$  
		lies in the open left half of the complex plane. 
		Then, for all $i \in N$, $\lambda \in \mathcal M_i$ and $\beta \in \mathcal N_i$, 
		we have the following asymptotic behaviour:
		\begin{align}
			\label{eq:asymp1}
			R^\lambda_i(t) &= O(t^{-\nu}) \text{ as } t \to \infty, \\
			\label{eq:asymp2}
			S^\beta_i(t)      & = O(t^{-\nu-1}) \text{ as } t \to \infty, \\
			\label{eq:asymp3}
			S^\beta_i(t)      &= O(t^{\nu-1}) \text{ as } t \to 0.
		\end{align}
		Furthermore,
		$$
			\int_0^\infty |S^\beta_i(t)| \du t < \infty. 
		$$
	\end{lemma}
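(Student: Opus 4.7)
The plan is to analyze each of $R_i^\lambda$ and $S_i^\beta$ via its defining inverse Laplace integral by deforming the Bromwich contour to a Hankel-type contour $Ha$ that wraps tightly around the branch cut of $s\mapsto s^{\hat\alpha}$ along the negative real axis. The hypothesis $\sigma_{\hat\alpha}(A)\subset\mathbb C_-$ guarantees that the zeros of $Q(s)=\det(s^{\hat\alpha}I-A)$ are bounded away from the closed right half-plane, so the integrands are holomorphic in the complement of the cut and the residue contributions from any roots of $Q$ in the left half-plane decay exponentially as $t\to\infty$ and can be absorbed into the error terms. This deformation mirrors the scheme of \cite[Lemma 8]{TuanThai_2022}, generalized from $n=2$ to arbitrary $n$.

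For the large-$t$ asymptotics I would expand $1/Q(s)$ near $s=0$. Since $Q(0)=(-1)^n\det A\neq 0$ (a consequence of $0\notin\sigma_{\hat\alpha}(A)$), one has a convergent expansion in fractional powers
\[
\frac{1}{Q(s)} = \frac{1}{(-1)^n\det A} + \sum_{\xi\in\mathcal B^n\setminus\{0,\ldots,0\}} d_\xi\, s^{\langle\hat\alpha,\xi\rangle} + \cdots.
\]
Multiplying by $s^{\lambda-1}$ (resp.\ $s^\beta$) and applying the Hankel identity $\frac{1}{2\pi\iu}\int_{Ha} e^{st}s^\mu\,\du s = t^{-\mu-1}/\Gamma(-\mu)$, which vanishes when $\mu$ is a non-negative integer so that one must pass to the next nonvanishing term, yields a leading contribution of order $t^{-\lambda}$ for $R_i^\lambda$ and of order $t^{-\beta-1}$ for $S_i^\beta$ when $\beta>0$, or of order $t^{-\nu-1}$ for $S_i^\beta$ when $\beta=0$ (the first nonvanishing term then comes from the $s^{\alpha_k}$ correction). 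Since every $\lambda\in\mathcal M_i$ satisfies $\lambda\ge\nu$ (because every index vector in $\widetilde{\mathcal B}^n_i$ or $\widetilde{\mathcal B}^n_{i;j}$ has at least one coordinate equal to $1$) and every strictly positive $\beta\in\mathcal N_i$ also satisfies $\beta\ge\nu$, the claimed bounds \eqref{eq:asymp1} and \eqref{eq:asymp2} follow.

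For the small-$t$ asymptotics of $S_i^\beta$ the argument is dual: I would examine $s^\beta/Q(s)$ as $|s|\to\infty$ along the deformed contour. Because the leading term of $Q(s)$ is $s^{\alpha_1+\cdots+\alpha_n}$ and every $\beta\in\mathcal N_i$ satisfies $\beta\le\sum_{k\neq i}\alpha_k\le\sum_k\alpha_k-\nu$, one obtains $|s^\beta/Q(s)|\le C|s|^{-\nu}$ uniformly on $Ha$ outside a neighbourhood of the origin. Substituting $u=st$ in the Hankel representation and exploiting this bound yields \eqref{eq:asymp3}. The integrability claim then follows by combining \eqref{eq:asymp2} with \eqref{eq:asymp3} and the continuity of $S_i^\beta$ on compact subsets of $(0,\infty)$, since both $t^{\nu-1}$ near $0$ and $t^{-\nu-1}$ near $\infty$ are integrable when $0<\nu\le 1$.

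The main technical obstacle is to justify the contour deformation in a quantitative way: one needs a lower bound $|Q(s)|\ge c|s|^{\sum_k\alpha_k}$ on the connecting arcs between the Bromwich line and the Hankel contour, together with a guarantee that the finitely many zeros of $Q$ in $\mathbb C_-$ stay uniformly separated from the deformed path. Both facts follow from $\sigma_{\hat\alpha}(A)\subset\mathbb C_-$ combined with the structure of $Q$ as a generalized polynomial in $s$ with fractional exponents. In the two-dimensional setting, these estimates were carried out in \cite[Lemma 8]{TuanThai_2022}; the extension to arbitrary $n$ is bookkeeping over the index sets $\mathcal M_i$ and $\mathcal N_i$ introduced above rather than a genuinely new idea, which is why the authors can refer to the earlier proof for the technical core.
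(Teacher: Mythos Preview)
Your proposal is correct and follows exactly the approach the paper intends: the paper does not spell out a proof but merely refers to \cite[Lemma 8]{TuanThai_2022}, and your outline (Hankel-contour deformation, small-$s$ expansion of $1/Q(s)$ for the large-$t$ asymptotics, large-$|s|$ bound $|s^\beta/Q(s)|\le C|s|^{-\nu}$ for the small-$t$ behaviour, and the combination of \eqref{eq:asymp2}--\eqref{eq:asymp3} for integrability) is precisely the argument of that reference, extended from $n=2$ to general $n$ via the index sets $\mathcal M_i$ and $\mathcal N_i$. Your identification of the key inequalities $\lambda\ge\nu$ for $\lambda\in\mathcal M_i$ and $\beta\le\sum_{k\ne i}\alpha_k$ for $\beta\in\mathcal N_i$ is exactly the bookkeeping needed to carry the two-dimensional proof over.
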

	
	Next, we apply the estimates of Lemma \ref{lemma:asymptotics-r-s}
	to the derive an intermediate result
	that will in the next step allow us to describe the behaviour of the terms in the
	second sum on the right-hand side of formula \eqref{ctbths1}, i.e.\ the asymptotic behaviour of the convolution 
	of $S^{\beta}_i$ with continuous functions. This result is a direct generalization of
	\cite[Theorem 3]{TuanThai_2022} and can be proved in the same manner.
	
	\begin{theorem}
		\label{thm:asymp}
		Let $\hat\alpha \in (0,1]^n$, $\nu = \min \{\alpha_1, \alpha_2, \ldots, \alpha_n\}$ 
		and $\beta \in \mathcal N_i$ for some $i \in N$. 
		For a given continuous function $g: [0,\infty) \rightarrow \mathbb R$, we set  
		$$
			F^{\beta}_i(t) := (S^\beta_i * g) (t) = \int_0^t S^\beta_i(t-s) g(s) \du s. 
		$$
		Suppose that $\sigma_{\hat\alpha}(A)\subset \mathbb C_{-}$. Then,  the following statements are true.
		\begin{itemize}
		\item [(i)] If $g$ is bounded then $F^{\beta}_i$ is also bounded.
		\item [(ii)] If $\lim_{t \to \infty} g(t) = 0$ then $\lim_{t \to \infty}F^{\beta}_i(t) = 0$.
		\item [(iii)] If there exists some $\eta > 0$ such that $g(t) = O(t^{-\eta})$ as $t \to \infty$, 
				then $F^{\beta}_i(t) = O(t^{-\mu})$ as $t \to \infty$ where $\mu = \min\{ \nu, \eta\}$.
		\end{itemize}
	\end{theorem}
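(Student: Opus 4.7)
My plan is to reduce all three assertions to integrability properties of the kernel $S^\beta_i$ supplied by Lemma \ref{lemma:asymptotics-r-s}, following the template of \cite[Theorem 3]{TuanThai_2022}. The three facts I will keep invoking are: $S^\beta_i \in L^1(0,\infty)$; $|S^\beta_i(u)| \le C u^{-\nu-1}$ for large $u$; and $|S^\beta_i(u)| \le C u^{\nu-1}$ for small $u$ (which, together with continuity of $g$, guarantees that the convolution integral converges and defines a continuous function of $t$). In each part it is convenient to substitute $u = t-s$ and work with
\[
F^\beta_i(t) = \int_0^t S^\beta_i(u)\, g(t-u) \,\du u.
\]

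Part (i) is then immediate: if $|g| \le M$, then $|F^\beta_i(t)| \le M\,\|S^\beta_i\|_{L^1(0,\infty)}$, which is finite by the last statement of Lemma \ref{lemma:asymptotics-r-s}. For part (ii), fix $\varepsilon > 0$ and use the hypothesis $g(t)\to 0$ to pick $T$ with $|g(s)|<\varepsilon$ for $s>T$; note that $g$ is also bounded on $[0,\infty)$ by some $M$, being continuous and vanishing at infinity. Split for $t>2T$:
\[
F^\beta_i(t) = \int_0^{t-T} S^\beta_i(u)\, g(t-u)\,\du u + \int_{t-T}^{t} S^\beta_i(u)\, g(t-u)\,\du u.
\]
On the first piece $t-u > T$ so $|g(t-u)|<\varepsilon$, giving a bound $\varepsilon\,\|S^\beta_i\|_{L^1}$; on the second piece $u\ge t-T\to\infty$, so integrability of $S^\beta_i$ forces $\int_{t-T}^\infty |S^\beta_i(u)|\,\du u$ to vanish as $t\to\infty$, and that term is dominated by $M\cdot o(1)$. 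Letting $t\to\infty$ and then $\varepsilon\to 0$ yields $F^\beta_i(t)\to 0$.

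For part (iii), the polynomial-decay hypothesis gives $|g(s)|\le C(1+s)^{-\eta}$, and $g$ is again bounded by continuity on $[0,\infty)$. Split the convolution at the midpoint: for $t\ge 1$,
\[
F^\beta_i(t) = \int_0^{t/2} S^\beta_i(u)\, g(t-u)\,\du u + \int_{t/2}^{t} S^\beta_i(u)\, g(t-u)\,\du u.
\]
On the first piece $t-u\ge t/2$, so $|g(t-u)| = O(t^{-\eta})$ and the $L^1$ bound on $S^\beta_i$ contributes $O(t^{-\eta})$. On the second piece $u\ge t/2$, so $|S^\beta_i(u)|\le Cu^{-\nu-1}$ and $\int_{t/2}^\infty u^{-\nu-1}\,\du u = O(t^{-\nu})$; combined with $\|g\|_\infty$ this contributes $O(t^{-\nu})$. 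Summing yields $F^\beta_i(t) = O(t^{-\mu})$ with $\mu = \min\{\nu,\eta\}$. The only real subtlety is placing the split point so that both $u$ and $t-u$ are forced to be large simultaneously, which the midpoint in (iii) and the shifted point $t-T$ in (ii) accomplish; the remainder reduces to routine estimates drawn directly from Lemma \ref{lemma:asymptotics-r-s}.
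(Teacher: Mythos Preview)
Your proposal is correct and follows essentially the same approach as the paper, which does not give an explicit proof here but simply states that the result is a direct generalization of \cite[Theorem~3]{TuanThai_2022} and can be proved in the same manner. Your argument---reducing everything to the $L^1$ integrability and the asymptotic bounds of $S^\beta_i$ from Lemma~\ref{lemma:asymptotics-r-s}, then splitting the convolution appropriately---is exactly that template carried out.
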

	
	From the above assertions, we obtain the following results on the asymptotic behavior of 
	solutions to the inhomogeneous linear mixed order system \eqref{bt3.1}.
	
	\begin{theorem}\label{main_result_1}
		Consider the initial value problem \eqref{bt3.1}-\eqref{dkd3} with
		$\hat\alpha \in (0,1]^n$. Set $\nu = \min \{\alpha_1, \alpha_2, \ldots, \alpha_n\}$ and assume that 
		$\sigma_{\hat\alpha}(A)\subset \mathbb C_{-}$. Then the following assertions hold. 
		\begin{itemize}
		\item [(i)] If $f$ is bounded then the solution of the initial value problem is also bounded,
			no matter how the initial value vector $x^0$ in \eqref{dkd3} is chosen.
		\item [(ii)] If $\lim_{t \to \infty} f(t) = 0$ then the solutions of \eqref{bt3.1} converge to $0$ 
			as $t \to \infty$
			for any choice of the initial value vector $x^0$.
		\item [(iii)] If there is some $\eta > 0$ such that $\|f(t) \| = O(t^{-\eta})$ as $t \to \infty$
			then, for any initial value vector $x^0$, the solution $x(\cdot)$ of \eqref{bt3.1} 
			satisfies $\|x(t)\| = O(t^{-\mu}) $ as $t \to \infty$, where $\mu = \min\{\nu, \eta\}$.  
		\end{itemize}
	\end{theorem}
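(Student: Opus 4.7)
The plan is to exploit the variation of constants formula \eqref{ctbths1}, which represents each component of the solution as $x_i(t) = H_i(t) + G_i(t)$, where $H_i(t) := \sum_{\lambda \in \mathcal{M}_i} c^i_\lambda R^\lambda_i(t)$ encodes the influence of the initial datum $x^0$ and $G_i(t) := \sum_{\beta \in \mathcal{N}_i} c^i_\beta (S^\beta_i * f_i)(t)$ is driven by the forcing term. Each of the three assertions will be established by analyzing the two pieces separately and then recombining.

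First I would treat the homogeneous term $H_i$. By Lemma~\ref{lemma:asymptotics-r-s}, each $R^\lambda_i$ satisfies $R^\lambda_i(t) = O(t^{-\nu})$ as $t\to\infty$. Because $Q(s) = \det(s^{\hat\alpha}I - A)$ has no zeros in the closed right half-plane (by the standing hypothesis $\sigma_{\hat\alpha}(A) \subset \mathbb{C}_{-}$), each $R^\lambda_i$, obtained as an inverse Laplace transform of $s^\lambda/(sQ(s))$, is continuous on $[0,\infty)$, and this together with the decay at infinity implies that it is globally bounded. Therefore $H_i$ is bounded on $[0,\infty)$, converges to $0$ as $t \to \infty$, and satisfies $H_i(t) = O(t^{-\nu})$; these properties hold for every choice of $x^0$, which only affects the constants $c^i_\lambda$.

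Next I would handle the forcing term $G_i$. The $L^1$ estimate $\int_0^\infty |S^\beta_i(s)|\,\du s < \infty$ from Lemma~\ref{lemma:asymptotics-r-s} together with the near-origin bound $S^\beta_i(t) = O(t^{\nu-1})$ guarantees that the convolutions $S^\beta_i * f_i$ are well defined, and Theorem~\ref{thm:asymp} applies to each summand of $G_i$. Part~(i) of that theorem settles claim~(i) here (bounded $f$ yields bounded $G_i$), part~(ii) settles claim~(ii) ($f(t) \to 0$ yields $G_i(t) \to 0$), and part~(iii) yields $G_i(t) = O(t^{-\mu})$ with $\mu = \min\{\nu,\eta\}$ whenever $\|f(t)\| = O(t^{-\eta})$.

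Combining the two contributions gives the theorem: the sum of two bounded functions is bounded, the sum of two functions vanishing at infinity vanishes at infinity, and since $\mu \le \nu$ by definition, the decay rate $t^{-\nu}$ of $H_i$ is at least as fast as $t^{-\mu}$, so $x_i(t) = O(t^{-\mu})$. Passing from the componentwise estimates to $\|x(t)\|$ is immediate by the equivalence of norms on $\mathbb{R}^n$. The main technical obstacle I expect is the careful verification of the regularity of the inverse Laplace transforms $R^\lambda_i$ and $S^\beta_i$ near $t = 0$ so that boundedness of $H_i$ and the well-posedness of the convolutions are fully justified; once the analytic structure of $1/Q(s)$ on a right half-plane is exploited, the argument parallels the two-dimensional case developed in \cite{TuanThai_2022} and the cited lemma applies essentially verbatim.
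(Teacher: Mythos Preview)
Your proposal is correct and follows exactly the approach the paper intends: it applies the variation of constants formula \eqref{ctbths1}, controls the homogeneous part via Lemma~\ref{lemma:asymptotics-r-s}, and handles the convolution part via Theorem~\ref{thm:asymp}, which is precisely what the paper's one-line proof (``using our Theorem~\ref{thm:asymp}'' in analogy with \cite[Theorem~4]{TuanThai_2022}) is pointing to. Your expanded version is the natural unpacking of that reference, and the closing remark about regularity near $t=0$ correctly identifies the only point requiring care.
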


	\begin{proof}
		This is a straightforward generalization of \cite[Theorem 4]{TuanThai_2022} that can be shown
		in an analog manner, using our Theorem \ref{thm:asymp}.
	\end{proof}
	
	\subsection{Nonlinear Systems}

	Finally, we consider the autonomous incommensurate fractional order nonlinear system
	\begin{align}\label{bt4}
		^C D^{\hat\alpha}_{0^+}x(t) & = Ax(t) +f(x(t)),\quad t > 0, \\
		x(0) & = x^0 \in \Omega\subset\mathbb R^n,\label{ic4}
	\end{align}
	where $\hat \alpha=(\alpha_1,\alpha_2, \ldots, \alpha_n)\in (0,1]^n$,  
	$A = \left ( a_{ij} \right ) \in \mathbb R^{n \times n}$, 
	$\Omega$ is an open subset of $\mathbb R^n$ with $0\in \Omega$, $f:\Omega\rightarrow \mathbb R^n$ 
	is locally Lipschitz continuous at the origin such that
	$f(0)=0$ and $\lim_{r\to 0} l_f (r)=0$ with 
	\[
		l_f(r) := \sup_{x,y\in B(0,r),\;x\ne y}\frac{\|f(x)-f(y)\|}{\|x-y\|}.
	\]
	Putting $g(t)=f(x(t))$ and repeating the arguments as in Subsection \ref{st5}, 
	we get the representation of the solution of the problem \eqref{bt4}
	\begin{align}
		\label{ctbths1.1}
		x_i(t)= \sum_{\lambda \in \mathcal M_i}c^i_\lambda R_i^\lambda(t) + 
		\sum_{\beta \in \mathcal N_i}c_\beta^i (S_i^\beta*f_i) (x(t)),
		\quad i=1, 2,\ldots,n.
	\end{align}
	We recall here the Mittag-Leffler stability definition that was introduced in \cite[Definition 2]{TuanThai_2022}.
%	and that generalizes the analog concept for commensurate systems for fractional differential
%	equations from \cite[Definition~4.1]{LCP}.
	\begin{definition}
		\label{def:mlstable}
		The trivial solution of \eqref{bt4} is Mittag-Leffler stable if there exist positive
		constants $\gamma,m$ and $\delta$ such that for any initial condition $x^0\in B(0,\delta)$, 
		the solution $\varphi(\cdot, x^0)$ of the initial value problem \eqref{bt4}-\eqref{ic4} 
		exists globally on the interval $[0,\infty)$ and
		\[
			\max \left \{ \sup_{t\in [0,1]} \|\varphi(t, x^0)\| , 
							\sup_{t \ge 1} t^\gamma \| \varphi(t, x^0) \| \right \} 
			\le m.
		\]
	\end{definition}
	
	By the same approach as in \cite[ Theorem 5]{TuanThai_2022}, we obtain the Mittag-Leffler stability 
	of the trivial solution of \eqref{bt4}:
	
	\begin{theorem}\label{main_result_2}
		Consider the system \eqref{bt4}. Assume that $\sigma_{\hat\alpha}(A)\subset \mathbb C_{-}$. 
		Then the trivial solution of the system of equations \eqref{bt4} is Mittag-Leffler stable. 
		More precisely, there exist constants $\delta, \epsilon > 0$ such that the unique global solution 
		$\varphi(\cdot,x^0)$ 
		of the initial value problem \eqref{bt4}--\eqref{ic4} satisfies the estimate 
		$\sup_{t \geq 1} t^\nu \| \varphi(t,x^0)\| \leq \epsilon$ 
		with $\nu = \min \{\alpha_1, \alpha_2, \ldots, \alpha_n\}$ provided that $\|x^0\| < \delta$.
	\end{theorem}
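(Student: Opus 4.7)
The plan is to prove Theorem \ref{main_result_2} via a Banach fixed point argument carried out in a suitable weighted function space, using the variation of constants formula \eqref{ctbths1.1} together with the estimates in Lemma \ref{lemma:asymptotics-r-s} and Theorem \ref{thm:asymp}. The key is to exploit the fact that $l_f(r) \to 0$ as $r \to 0$, which allows us to make the nonlinearity arbitrarily close to a contraction on a sufficiently small ball around the origin.

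First, given $\epsilon > 0$ to be fixed later, I would introduce the Banach space
\[
	X_{\epsilon} := \left\{ \psi \in C([0,\infty), \mathbb R^n) :
		\|\psi\|_{*} := \max\!\left\{\sup_{t \in [0,1]} \|\psi(t)\|,\; \sup_{t \geq 1} t^{\nu}\|\psi(t)\|\right\} \leq \epsilon \right\},
\]
equipped with the norm $\|\cdot\|_*$, and for each initial datum $x^0$ with $\|x^0\|$ small I would define the operator $\mathcal T = (\mathcal T_1, \ldots, \mathcal T_n)$ on $X_\epsilon$ by the right-hand side of \eqref{ctbths1.1}, namely
\[
	(\mathcal T \psi)_i(t) := \sum_{\lambda \in \mathcal M_i} c^i_\lambda R^\lambda_i(t)
	+ \sum_{\beta \in \mathcal N_i} c^i_\beta \bigl(S^\beta_i * f_i(\psi)\bigr)(t), \qquad i \in N.
\]
A fixed point of $\mathcal T$ in $X_\epsilon$ is precisely a global solution of \eqref{bt4}--\eqref{ic4} satisfying the desired Mittag-Leffler decay.

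Next I would verify that, for suitable $\delta > 0$ and $\epsilon > 0$, the operator $\mathcal T$ maps $X_\epsilon$ into itself whenever $\|x^0\| < \delta$. By Lemma \ref{lemma:asymptotics-r-s}, each $R^\lambda_i$ is bounded on $[0,1]$ and satisfies $R^\lambda_i(t) = O(t^{-\nu})$ as $t \to \infty$; together with the linearity of the coefficients $c^i_\lambda$ in $x^0$, the first sum contributes a term whose $\|\cdot\|_*$-norm is bounded by $C_1 \|x^0\|$ for some constant $C_1$ independent of $x^0$. For the convolution term, I would note that $f(0) = 0$ and $\psi \in X_\epsilon$ imply $\|f(\psi(s))\| \leq l_f(\epsilon) \|\psi(s)\|$, so $f \circ \psi$ inherits the decay $\|f(\psi(t))\| = O(t^{-\nu})$ on $[1,\infty)$ and is bounded on $[0,1]$. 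Applying Theorem \ref{thm:asymp}(i) and (iii) with $\eta = \nu$ therefore yields
\[
	\|(S^\beta_i * f_i(\psi))(t)\| \leq C_2\, l_f(\epsilon)\, \|\psi\|_*\, \min\{1, t^{-\nu}\}
\]
for $t \geq 0$, with $C_2$ independent of $\psi$. Choosing $\epsilon > 0$ so small that $l_f(\epsilon)$ is small, and then $\delta > 0$ so small that $C_1 \delta + C_2 l_f(\epsilon) \epsilon \leq \epsilon$, gives invariance of $X_\epsilon$.

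Finally, for $\psi, \phi \in X_\epsilon$ the local Lipschitz bound on $B(0,\epsilon)$ gives $\|f(\psi(s)) - f(\phi(s))\| \leq l_f(\epsilon) \|\psi(s) - \phi(s)\|$, and applying Theorem \ref{thm:asymp} once more to the difference yields $\|\mathcal T\psi - \mathcal T\phi\|_* \leq C_3\, l_f(\epsilon)\, \|\psi - \phi\|_*$. Shrinking $\epsilon$ further so that $C_3\, l_f(\epsilon) < 1$ makes $\mathcal T$ a contraction, and the Banach fixed point theorem delivers the unique global solution $\varphi(\cdot, x^0) \in X_\epsilon$, which establishes Mittag-Leffler stability with the constant $\nu$. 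The main obstacle in this scheme is the careful verification that Theorem \ref{thm:asymp}(iii) (with $\eta = \nu$) indeed transfers the $O(t^{-\nu})$ decay through the convolution with the prescribed uniform constant $C_2$; one must be sure that the constant does not depend on $\psi$, which follows because the estimate in Theorem \ref{thm:asymp}(iii) is obtained by splitting the convolution integral and using the integrability $\int_0^\infty |S^\beta_i(t)|\,\du t < \infty$ from Lemma \ref{lemma:asymptotics-r-s} together with the explicit decay rates of $S^\beta_i$ near $0$ and at $\infty$.
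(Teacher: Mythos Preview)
Your proposal is correct and follows essentially the same approach that the paper has in mind: the paper does not spell out its own proof but simply refers to \cite[Theorem~5]{TuanThai_2022}, and that reference proceeds exactly as you outline---a Banach fixed point argument in the weighted space with norm $\|\cdot\|_*$, using the representation \eqref{ctbths1.1} together with the decay estimates of Lemma~\ref{lemma:asymptotics-r-s} and Theorem~\ref{thm:asymp}(iii) (with $\eta=\nu$) to obtain invariance and contraction on a small ball once $l_f(\epsilon)$ is made sufficiently small.
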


\section{Examples}
\label{s6}

	This section is devoted to introducing some examples to illustrate the 
	validity of the two main results obtained in Section 5.

	\begin{example}
		\label{ex:61}
		We consider the system 
		\begin{align}
			^C D^{\hat\alpha}_{0^+} x(t)& = Ax(t)+f(t),\; t > 0,\label{Eq54} \\
			x(0)&=x^0\in \R^4,\label{incond_1}
		\end{align}
		where
		$$ 
			A = \begin{pmatrix}
				-0.5&-0.2  &-0.15  &0.25  \\
				0.15&-0.4  &0.2  &-0.15  \\
				0.25& 0.15  &-0.6  &0.3  \\
				0.2& -0.1 &-0.1  &-0.3  
				\end{pmatrix},
		$$
		$\hat\alpha = ( \alpha_1, \alpha_2, \alpha_3, \alpha_4) = 
			\left ( \frac{128}{71 \sqrt{13}}, \frac{64}{71 \sqrt{13}}, \frac{90}{47 \sqrt{33}}, 
			\frac{45}{47 \sqrt{33}} \right )$
		and $f(t) = (f_1(t), f_2(t), f_3(t), f_4(t))^{\mathrm T}$ with $f_i(t) =(1 + t^i)^{-1}$, $i = 1, 2, 3, 4$. 
		As shown in Example \ref{example2}, we see that $\sigma_{\hat\alpha}(A) \subset \mathbb C_{-}$.
		Moreover, $\|f(t)\| = O(t^{-1})$ as $t \to \infty$. Due to Theorem \ref{main_result_1}, 
		the system \eqref{Eq54} is globally attractive and the solution $\varphi(\cdot,x^0)$ 
		satisfies $\|\varphi(t,x^0)\| = O(t^{-\frac{45}{47\sqrt{33}}})$ as $t \to \infty$ for 
		any $x^0\in\R^4$.  The left part of Figure \ref{refhinh} shows a plot of the solution for a specific initial condition.
	\end{example}

%	\begin{figure}[htp]
%		\begin{center}
%      			\includegraphics[width=0.8\textwidth]{}% {Fig-61}
%		\end{center}
%		\caption{Trajectories of the solution of \eqref{Eq54} with the initial condition 
%			$x^0=(0.5, -0.3, 0.7, -0.4)^{\mathrm T}$. As in Figure \ref{refhinh1}, the
%			horizontal axis is in a logarithmic scale.}
%		\label{refhinh3}
%	\end{figure} 

	\begin{figure}[htb]
      			\includegraphics[width=0.45\textwidth]{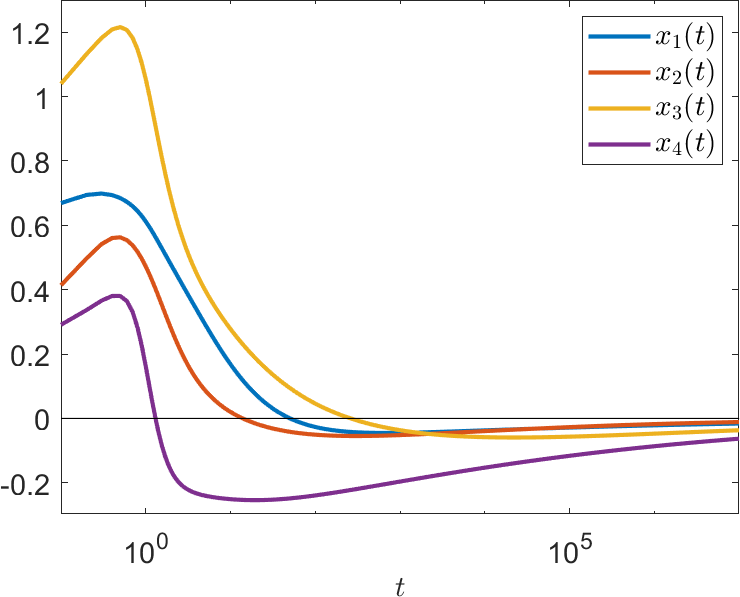}% {Fig-61}
			\hfill
			\includegraphics[width=0.45\textwidth]{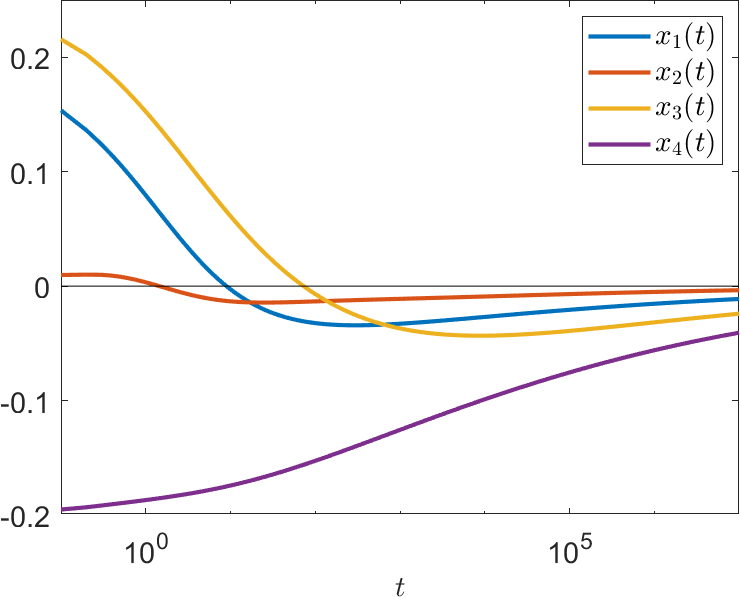} %{Fig-62}
		\caption{\emph{Left:} Trajectories of the solution of \eqref{Eq54} with the initial condition 
			$x^0=(0.5, -0.3, 0.7, -0.4)^{\mathrm T}$. 
			\emph{Right:} Trajectories of the solution of \eqref{Eq58} with the initial condition 
			$x^0=(0.2,-0.1,0.3,-0.25)^{\mathrm T}$.
			As in Figure \ref{refhinh1}, the horizontal axes in both plots are in a logarithmic scale.
			{Both numerical solutions have been computed with Garrappa's implementation
			of the trapezoidal algorithm 
			mentioned in Remark \ref{rmk:numsol} using the step size $h = 0.1$}.}
		\label{refhinh}
	\end{figure}
     
	\begin{example}
        	Let us consider the system 
		\begin{align}
			^C D^{\hat\alpha}_{0^+} x(t)& = Ax(t)+f(x(t)),\; t > 0, \label{Eq58}\\
			x(0)&=x^0\in \R^4, \label{incond2}
		\end{align}
		where $A$ and $\hat \alpha$ are as in Example \ref{ex:61}
         	and $f(x(t)) = (f_1(x(t)), f_2(x(t)), f_3(x(t)), f_4(x(t))^{\mathrm T}$ with 
         	$f_1(x) = x_1^{2}+x_2^3-x_4^3$, $f_2(x) = 3x_1^{2}+4x_2^3-5x_4^4$, 
         	$f_3(x)=f_4(x) = x_1^{3}+3x_2^3$ for $x=(x_1,\dots,x_4)^{\rm T}\in \R^4$. 
         	Due to the fact that $\sigma_{\hat\alpha}(A) \subset \mathbb C_{-}$,
         	Theorem \ref{main_result_2} asserts that the system \eqref{Eq58} is Mittag-Leffer stable. 
         	Furthermore, when the initial value vector $x^0$ is close enough to the origin, 
         	its solution $\varphi(\cdot,x^0)$ converges to the origin at a rate no slower than 
         	$t^{-\frac{45}{47\sqrt{33}}}$ as $t \to \infty$.  We provide plots of a solution in 
         	the right part of Figure \ref{refhinh}.
	\end{example}
        
%	\begin{figure}[htp]
%		\begin{center}
%			\includegraphics[width=0.8\textwidth]{} %{Fig-62}
%		\end{center}
%		\caption{Trajectories of the solution of \eqref{Eq58} with the initial condition 
%			$x^0=(0.2,-0.1,0.3,-0.25)^{\mathrm T}$.}
%		\label{refhinh4}
%	\end{figure}

        To further illustrate the range of applicability of our results, we conclude with two more examples
        that have also been investigated with completely different methods elsewhere~\cite{DH}. The 
        fundamental difference between these following examples on the one hand and the examples 
        discussed so far on the other hand is that we now look at coefficient matrices~$A$ where some of the diagonal entries
        are zero (Example \ref{ex:dh1}) or even positive (Example \ref{ex:dh2}) while in the earlier examples all diagonal entries 
        had been negative.
        
        \begin{example}
        	\label{ex:dh1}
		We consider the linear homogeneous system~\eqref{eq1}
		with $\hat \alpha = (2/5, 3/10, 1/2)^{\mathrm T}$
		and 
		\[
			A = \begin{pmatrix}
				-3 & 0 & 1.5 \\
				-0.5 & 0 & 0.5 \\
				6 & -1 & -3    
				\end{pmatrix} .
		\]
		For this problem, we may apply Theorem \ref{dl1} and find that $m = 10$, 
		i.e.\ $\gamma = 1/10$, and $\hat p = (4, 3, 5)^{\mathrm T}$.		
		Thus, the matrix $B$ is of size $(12 \times 12)$. 
		Taking into consideration that, in the notation of Section \ref{s3},
		$\det A_{(2)} = \det A_{(3)} = \det A_{(1,3)} = 0$, the nonzero elements of its 
		rightmost column are 
		$(B)_{1, 12} = -b_0 = \det A = -3/4$, 
		$(B)_{5, 12} = -b_4 = - \det A_{(1)} = -1/2$,
		$(B)_{8, 12} = -b_7 = \det A_{(1,2)} = -3$ and
		$(B)_{9, 12} = -b_8 = \det A_{(2,3)} = -3$.
		The eigenvalues $\lambda_k$ of $B$ are plotted in the left part of Figure \ref{fig:dh1} from which 
		one can see that the property $|\arg \lambda_k | > \pi \gamma / 2$ for all $k$, so that
		the system is asymptotically stable. 
		A plot of one particular solution is shown in the right part of Figure \ref{fig:dh1}.
		Here, the asymptotics can be seen to set in much earlier than in the previous examples.
	\end{example}
        
	\begin{figure}[htb]
			\includegraphics[height=0.42\textwidth]{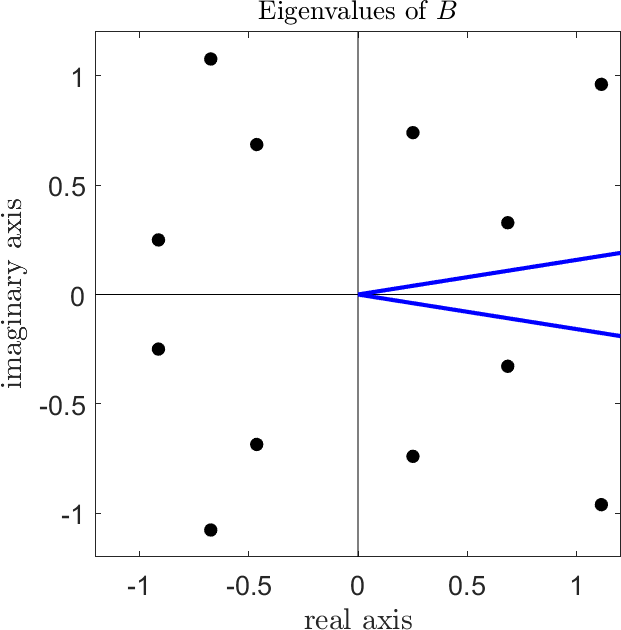}
			\hfill
			\includegraphics[height=0.42\textwidth]{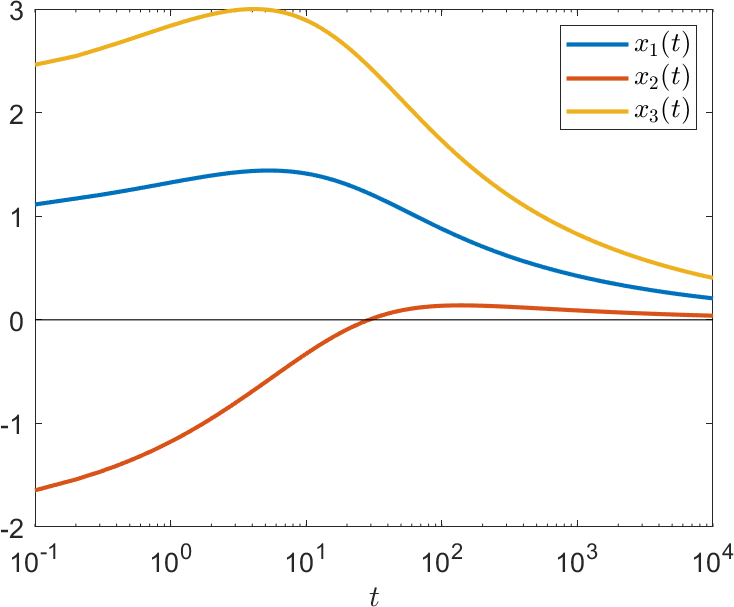}
		\caption{\emph{Left:} Location of the eigenvalues of the matrix $B$ from Example \ref{ex:dh1} in the complex plane.
			The blue rays are oriented at an angle of $\pm \gamma \pi / 2 = \pm \pi/20$ from the 
			positive real axis and hence indicate the boundary of the critical sector 
			$\{ z \in \mathbb C : |\arg z | \le \gamma \pi /2\}$.
			Since all eigenvalues are outside of this sector, we can derive the asymptotic stability of the system.
			\emph{Right:} Trajectories of the solution of Example \ref{ex:dh1} with the initial condition 
			$x^0 = (1, -2, 2)^{\mathrm T}$, {numerically computed with the same algorithm as in the other
			examples with a step size of $h = 0.1$}.}
		\label{fig:dh1}
	\end{figure}

%	\begin{figure}[htp]
%		\begin{center}
%			\includegraphics[width=0.8\textwidth]{}
%		\end{center}
%		\caption{Trajectories of the solution of Example \ref{ex:dh1} with the initial condition 
%			$x^0 = (1, -2, 2)^{\mathrm T}$.}
%		\label{fig:dh1}
%	\end{figure}

        \begin{example}
        	\label{ex:dh2}
        	In our last example, we consider the linear homogeneous system~\eqref{eq1}
		with
		\[
			A = \begin{pmatrix}
				-1 & 1 & 0 \\
				0.25 & -2 & 1 \\
				-2 & 0 & 1    
				\end{pmatrix} 
		\]
		and  $\hat \alpha = (1/2, 2/5, 3/10)^{\mathrm T}$.
		For this problem, we may also apply Theorem \ref{dl1} and find that $m = 10$, 
		i.e.\ $\gamma = 1/10$, and $\hat p = (5, 4, 3)^{\mathrm T}$.		
		Thus, the matrix $B$ is again of size $(12 \times 12)$, and
		the nonzero elements of its 	rightmost column are, once more using the 
		notation of Section \ref{s3},
		$(B)_{1, 12} = -b_0 = \det A = -1/4$, 
		$(B)_{4, 12} = -b_3 = - \det A_{(3)} = -7/4$,
		$(B)_{5, 12} = -b_4 = -\det A_{(2)} = 1$, 
		$(B)_{6, 12} = -b_5 = -\det A_{(1)} = 2$, 
		$(B)_{8, 12} = -b_7 = \det A_{(2,3)} = -1$,
		$(B)_{9, 12} = -b_8 = \det A_{(1,3)} = -2$ and
		$(B)_{10, 12} = -b_9 = \det A_{(1,2)} = 1$.
		The eigenvalues $\lambda_k$ of $B$ are plotted in the left part of Figure \ref{fig:dh2} from which 
		one can see that the property $|\arg \lambda_k | > \pi \gamma / 2$ for all $k$, so that
		the system is asymptotically stable. 
		A plot of one particular solution is shown in the right part of  Figure~\ref{fig:dh2}.
        \end{example} 

	\begin{figure}[htb]
			\includegraphics[height=0.4\textwidth]{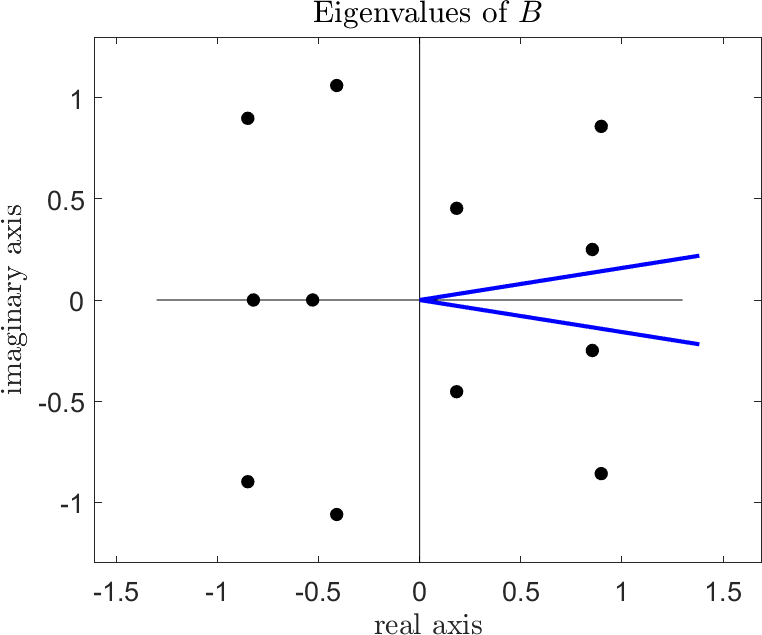}
			\hfill
			\includegraphics[height=0.4\textwidth]{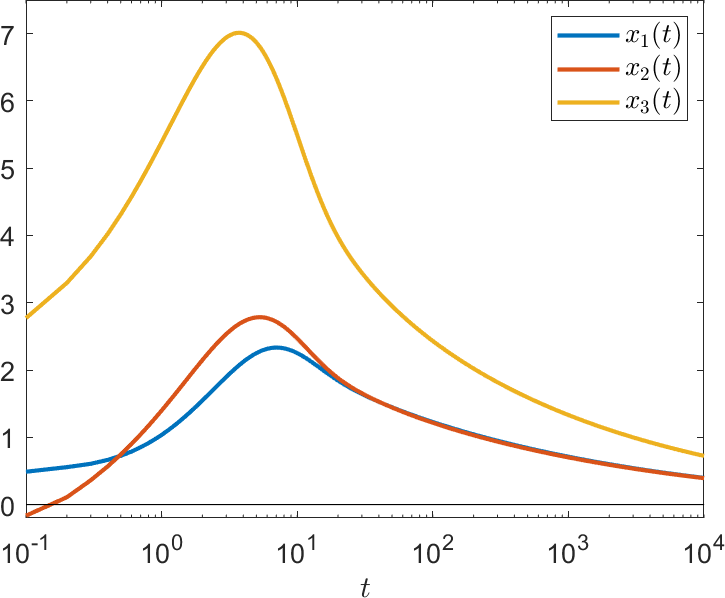}
		\caption{\emph{Left:} Location of the eigenvalues of the matrix $B$ from Example \ref{ex:dh2} in the complex plane.
			The blue rays are oriented at an angle of $\pm \gamma \pi / 2 = \pm \pi/20$ from the 
			positive real axis and hence indicate the boundary of the critical sector 
			$\{ z \in \mathbb C : |\arg z | \le \gamma \pi /2\}$.
			Since all eigenvalues are outside of this sector, the system is asymptotically stable.
			\emph{Right:} Trajectories of the solution of Example \ref{ex:dh2} with the initial condition 
			$x^0 = (1, -2, 2)^{\mathrm T}${, again computed with the same numerical method and a step size $h=0.1$}.}
		\label{fig:dh2}
	\end{figure}

%	\begin{figure}[htp]
%		\begin{center}
%			\includegraphics[width=0.8\textwidth]{eig6-4}
%		\end{center}
%		\caption{Location of the eigenvalues of the matrix $B$ from Example \ref{ex:dh2} in the complex plane.
%			The blue rays indicate are oriented at an angle of $\pm \gamma \pi / 2 = \pm \pi/20$ from the 
%			positive real axis and hence indicate the boundary of the critical sector 
%			$\{ z \in \mathbb C : |\arg z| \le \gamma \pi /2\}$.
%			Since all eigenvalues are outside of this sector, we can derive the asymptotic stability of the system.}
%		\label{fig:dh2b}
%	\end{figure}
%
%	\begin{figure}[htp]
%		\begin{center}
%			\includegraphics[width=0.8\textwidth]{}
%		\end{center}
%		\caption{Trajectories of the solution of Example \ref{ex:dh2} with the initial condition 
%			$x^0 = (1, -2, 2)^{\mathrm T}$.}
%		\label{fig:dh2}
%	\end{figure}

\section{Conclusions}

{%
For a very large class of incommensurate fractional differential equation systems, 
we have developed an algorithm that can effectively determine whether or not the
given system is stable. In contrast to earlier methods, our algorithm only requires input data information that is
readily available in practice. The method is general in the sense that it works independently of whether the orders 
of the individual differential equations are rational or irrational.}

{%
If all orders are rational then our approach comprises the application of Theorem \ref{dl1} to the coefficient matrix of the
system, thus constructing an auxiliary matrix $B$, and finding out the locations of the eigenvalues (in the classical
sense) of this matrix $B$, which is a standard task that can be solved by classical techniques from numerical
linear algebra. Having done this, Theorem \ref{dl1} then immediately allows to draw the desired conclusions about the 
system's stabilty properties from the eigenvalues of $B$.}

{%
In the case when some or all equations of the system have irrational orders, it is necessary to apply our Algorithm 2
(which, in turn, uses Algorithm 1) first to obtain a rational approximation of the given system to which we then apply 
the scheme outlined above. As indicated in Subsection \ref{subs:rat-equiv}, the initial step of this process requires 
to find suitable lower bounds for the quantity $\delta^2_{\hat \alpha}(A)$. For this task, Proposition \ref{md1}
provides a general solution under certain assumptions on the system's coefficient matrix. If the matrix does not have
the required properties then an individual investigation is currently necessary. The search for suitable bounds for
$\delta^2_{\hat \alpha}(A)$ under less restrictive assumption is a relevant question for future research.}
        	
\section*{Acknowledgments}
%The authors thank the anonymous referees for useful comments and suggestions which help in improving presentation of the paper.
%
%This research is supported by a grant from the Vietnam Academy of Science and Technology under the grant number {\bf %CTTH00.03/23-24}. 
The research of H. D. Thai and H. T. Tuan is supported by the Vietnam National Foundation for Science and Technology Development (NAFOSTED) under grant number {\bf 101.02--2021.08}.

\end{document}